\numberwithin{equation}{section}
\def \dis {\displaystyle}
\def \confai {-\kern -.5em\rightharpoonup}
\def \cqfd {\hfill$\Box$}
\def \div{\mbox{\rm div}}
\def \Div{\mbox{\rm Div}}
\def \curl{\mbox{\rm curl}}
\def \al {\alpha}
\def \be {\beta}
\def \ga {\gamma}
\def \De {\Delta}
\def \ep {\varepsilon}
\def \Om {\Omega}
\def \la {\lambda}
\def \La {\Lambda}
\def \ph {\varphi}
\def \th {\theta}
\def \Si {\Sigma}
\def \NN {\mathbb N}
\def \ZZ {\mathbb Z}
\def \RR {\mathbb R}
\def \D {\mathscr{D}}
\def \F {\mathscr{F}}
\def \beq {\begin{equation}}
\def \eeq {\end{equation}}
\def \ba {\begin{array}}
\def \ea {\end{array}}
\def \bs {\bigskip}
\def \ms {\medskip}
\def \ecart {\noalign{\medskip}}
\newtheorem{Thm}{Theorem}[section]
\newtheorem{Pro}[Thm]{Proposition}
\newtheorem{Lem}[Thm]{Lemma}
\newtheorem{Adef}[Thm]{Definition}
\newenvironment{Def}{\begin{Adef}\rm}{\end{Adef}}
\newtheorem{Arem}[Thm]{Remark}
\newenvironment{Rem}{\begin{Arem}\rm}{\end{Arem}}
\newtheorem{Aappl}[Thm]{Application}
\newtheorem{Aexa}[Thm]{Example}
\newtheorem{Anot}[Thm]{Notation}
\def \refe #1.{(\ref{#1})}
\def \reff #1.{figure~\ref{#1}}
\def \refs #1.{Section~\ref{#1}}
\def \refss #1.{Subsection~\ref{#1}}
\def \refD #1.{Definition~\ref{#1}}
\def \refT #1.{Theorem~\ref{#1}}
\def \refL #1.{Lemma~\ref{#1}}
\def \refC #1.{Corollary~\ref{#1}}
\def \refP #1.{Proposition~\ref{#1}}
\def \refPt #1.{Properties~\ref{#1}}
\def \refR #1.{Remark~\ref{#1}}
\def \refA #1.{Application~\ref{#1}}
\def \refE #1.{Example~\ref{#1}}
\def \refN #1.{Notation~\ref{#1}}
\newcounter{marnote}
\title{Isotropic realizability of a strain field for the incompressible two-dimensional Stokes equation}
\author{M. Briane\footnote{INSA de Rennes, IRMAR (CNRS, UMR 6625), FRANCE -- mbriane@insa-rennes.fr}}
\begin{document}
\maketitle
\begin{abstract}
In the paper we study the problem of the isotropic realizability in $\RR^2$ of a regular strain field $e(U)={1\over 2}\left(DU+DU^T\right)$ for the incompressible Stokes equation, namely the existence of a positive viscosity $\mu>0$ solving the Stokes equation in $\RR^2$ with the prescribed field~$e(U)$. We show that if $e(U)$ does not vanish at some point, then the isotropic realizability holds in the neighborhood of that point. The global realizability in $\RR^2$ or in the torus is much more delicate, since it involves the global existence of a regular solution to a semilinear wave equation the coefficients of which depend on the derivatives of $U$. Using the semilinear wave equation we prove a small perturbation result: If $DU$ is periodic and close enough to its average for the $C^4$-norm, then the strain field is isotropically realizable in a given disk centered at the origin. On the other hand, a counter-example shows that the global realizability in $\RR^2$ may hold without the realizability in the torus, and it is discussed in connection with the associated semilinear wave equation. The case where the strain field vanishes is illustrated by an example. The singular case of a rank-one laminate field is also investigated.
\end{abstract}
\noindent
{\bf Keywords:} isotropic realizability, strain field, Stokes equation, first-order hyperbolic system, semilinear second-order hyperbolic equation
\par\bs\noindent
{\bf Mathematics Subject Classification:} 35L05, 35L40, 35L71, 35Q30
\section{Introduction}
In the theory of composites (see, {\em e.g.}, \cite{Mil}) the effective properties of a composite are classically obtained by the interactions of several isotropic phases periodically arranged, involving some periodic electric fields and current fields. It turns out that the related electric field may satisfies some constraints. Indeed, in two-dimensional conductivity Alessandrini and Nesi \cite{AlNe} showed the positivity of the determinant of the periodic matrix-valued electric field (each row of which corresponds to the vector electric field associated with one direction of the applied field). Hence, a two-dimensional matrix gradient field with a non-positive determinant cannot be an electric field. Therefore, it is natural to characterize the electric fields among all the possible gradient fields, and the current fields among all the possible divergence free fields. In this spirit a periodic gradient field $\nabla u$ is said to be isotropically realizable as an electric field in $\RR^d$ if there exists a positive conductivity $\sigma$ solving the equation
\beq\label{conequ}
\div\left(\sigma\nabla u\right)=0\quad\mbox{in }\RR^d.
\eeq
The isotropic realizability holds in the torus if moreover the conductivity can be chosen periodic. Following \cite{BMT} it is easy to build a periodic regular gradient field which is isotropically realizable in the whole space but not in the torus. In \cite{BMT} we have completely characterized the set of the periodic regular gradients as isotropically realizable electric fields using a gradient flow approach. So, in dimension two a periodic regular gradient field is shown to be isotropically realizable in $\RR^2$, if and only if it does not vanish in~$\RR^2$. Moreover, the isotropic realizability in the torus needs an extra assumption satisfied by the gradient flow. Similarly, a periodic divergence free field $j$ is said to be isotropically realizable if there exists a positive conductivity $\sigma$ such that $\sigma^{-1}j$ is a gradient. In \cite{BrMi} we have proved that in dimension three any periodic regular divergence free field is isotropically realizable under some geometrical assumptions. To this end, we have used a more sophistical approach based on three dynamical systems along the current field, its curl and the cross product with its curl. However, the characterization of the current fields is less complete than the characterization of the electric fields.
\par
In this paper we study the isotropic realizability of a strain field for the incompressible Stokes equation in dimension two. More precisely, let $U:\RR^2\to\RR^2$ be a regular divergence free field the gradient of which is possibly periodic. The question is the existence of a positive continuous viscosity $\mu:\RR^2\to(0,\infty)$ and a continuous pressure $p:\RR^2\to\RR$ such that the symmetrized gradient $e(U)={1\over 2}\left(DU+DU^T\right)$ is solution of the Stokes equation
\beq\label{Sequi}
-\,\Div\big(\mu\,e(U)\big)+\nabla p=0\quad\mbox{in }\RR^2.
\eeq
Since the strain field is a matrix, the dynamical system approach of \cite{BMT,BrMi} does not apply. Moreover, we have not succeeded to obtain a global realizability result in $\RR^2$ as general as for the electric fields and the current fields in \cite{BMT,BrMi}. The difficulty comes from the approach based on the existence of solutions to specific hyperbolic equations.
\par
In Section~\ref{s.locrea} we study the local realizability of a regular strain field. We prove (see Theorem~\ref{thm.isorea}) that if the strain field does not vanish at some point of $\RR^2$, then the isotropic realizability holds in a neighborhood of that point. Using in equation \eqref{Sequi} the representation in dimension two of a divergence free gradient field as an orthogonal gradient, we are led to a hyperbolic system which allows us to construct both a suitable viscosity $\mu$ and a pressure $p$.
\par
In Section~\ref{s.glorea} the question of the global realizability is investigated. To this end we consider the equivalent form of equation \eqref{Sequi}
\beq
\curl\left[\Div\big(\mu\,e(U)\big)\right]=0\quad\mbox{in }\RR^2,
\eeq
for which we search a positive solution of the type $\mu=e^u$. We are thus led to the semilinear wave equation
\beq\label{Sequi2}
e^{-u}\,\curl\left[\Div\big(e^u\,e(U)\big)\right]=0\quad\mbox{in }\RR^2,
\eeq
which must be satisfied by some regular function $u$ in $\RR^2$, the coefficients of which depend on the derivatives of the prescribed velocity $U$. It is well known (see, {\em e.g.}, \cite{Hor} and the references therein) that such a nonlinear wave equation does not admit necessarily a global regular solution for given initial data. However, it is not clear that one cannot choose some suitable initial data which induce a global solution. This is the crucial point related to the question of the global realizability.
Due to this difficulty we have not obtained a global realizability result but only a quasi-global realizability result under a small perturbation condition.
More explicitly, we have proved the following result (see Theorem~\ref{thm.isoreaper}): Let $M$ be a matrix in $\RR^{2\times 2}$ with zero trace and $M+M^T\neq 0$, and let $U:\RR^2\to\RR^2$ be a regular divergence free field such that $DU$ is periodic with average~$M$. For any $R>0$, if the norm of $DU$ in $C^4(\RR^2)^{2\times 2}$ is less than some value $\ep_R>0$, then the strain field $e(U)$ is isotropically realizable in the disk $D(0,R)$ centered at the origin and of radius $R$. Unhappily, it is difficult to estimate the value $\ep_R$ with respect to~$R$, since it is linked to the lifespan of the solutions $u$ to the semilinear wave equation \eqref{Sequi2}, which is not known as above mentioned. So, $\ep_R$ could tend to $0$ as $R\to\infty$, which would prevent the global isotropic realizability of the strain $e(U)$ in $\RR^2$. When the realizability of the strain field is not realizable in the torus,  the following alternative holds (see Proposition~\ref{pro.alt}):
\begin{enumerate}
\item the semilinear wave equation \eqref{Sequi2} has not a global regular solution $u$,
\item any global regular solution $u$ to \eqref{Sequi2} is either not bounded or not uniformly continuous in~$\RR^2$.
\end{enumerate}
\par
Section~\ref{s.exa} is devoted to singular cases. Section~\ref{ss.cex} deals with the strain field $e(U_\ep)$, $\ep>0$, defined by
\beq
e(U_\ep):=\begin{pmatrix} 1 & \ep\sin(2\pi y) \\ \ep\sin(2\pi y) & -1 \end{pmatrix}\quad\mbox{for }(x,y)\in\RR^2,
\eeq
which is shown to be not isotropically realizable in the torus (see Proposition~\ref{pro.norea}). The main result of Section~\ref{s.glorea} thus implies that $e(U_\ep)$ is isotropically realizable in the given disk $D(0,R)$ provided that $\ep$ is small enough. The strain field $e(U_\ep)$ is actually isotropically realizable in~$\RR^2$ with the viscosity $\mu(x,y):=e^{2\pi x}$. Hence, by virtue of the above alternative the regular solutions~$u$ (including the particular solution $u(x,y):=2\pi x$) of the equation
\beq
\curl\left[\Div\big(e^u\,e(U_\ep)\big)\right]=0\quad\mbox{in }\RR^2,
\eeq
are either not bounded or not uniformly continuous in $\RR^2$.
\par
Next, Section~\ref{ss.exavan} is devoted to a case where the strain field vanishes at some point. In the electric field framework the Hartman-Wintner theorem (see \cite{HaWi}, \cite{Sch} Chap.~7) claims that if $\nabla u$ is a realizable (namely $u$ is solution to some equation \refe{conequ}.) regular non-zero electric field in $\RR^2$ with $\nabla u(X^*)=0$, then the critical point $X^*$ is isolated and the following condition holds (see~\cite{Ale}, Remark 1.2):
\beq\label{HWcon}
\exists\,n\in\NN\setminus\{0\},\ \exists\,C\geq 1,\quad C^{-1}\,|X-X^*|^n\leq|\nabla u(X)|\leq C\,|X-X^*|^n
\quad\mbox{for $X$ close to $X^*$}.
\eeq
Up to our knowledge there is no similar result for a strain field in $\RR^2$ which vanishes at some point.
We have simply proved (see Proposition~\ref{pro.sepvar}) that the particular regular strain field only vanishing at the origin:
\beq
e(U):=\begin{pmatrix} 0 & f(x)+g(y) \\ f(x)+g(y) & 0 \end{pmatrix}
\quad\mbox{where } f,g\in C^\infty\big([-1,1]^2\big)\mbox{ and }f(0)=g(0)=0,
\eeq
is isotropically realizable in the neighborhood of the origin, if and only if
\beq
\exists\,a>0,\quad e(U)(X)=a\,|X|^2\begin{pmatrix} 0 & 1 \\ 1 & 0 \end{pmatrix}+o\big(|X|^2\big),
\eeq
which is sharper than the Hartman-Wintner condition~\eqref{HWcon}.
\par
Finally, in Section~\ref{ss.lam} we study a rank-one laminate strain field which takes only two values and is thus not continuous. We give (see Theorem~\ref{thm.realam}) a necessary and sufficient condition on the two phases so that the rank-one laminate strain field is isotropically realizable with a similar rank-one laminate viscosity.
\subsection*{Notations}
\begin{itemize}
\item $I_2$ denotes the unit matrix of $\RR^2$, and $R_\perp:=\begin{pmatrix} 0 & -1 \\ 1 & 0 \end{pmatrix}$.
\item For $A\in\RR^{2\times 2}$, $A^T$ denotes the transposed of the matrix $A$.
\item $\cdot$ denotes the scalar product in $\RR^d$.
\item For any matrices $A,B\in\RR^{2\times 2}$, $A:B:={\rm tr}\left(A^T\!B\right)$, where tr denotes the trace of a matrix, and $|A|:=\sqrt{{\rm tr}\left(A^T\!A\right)}$ is the Frobenius norm in $\RR^{2\times 2}$.
\item $\RR^{2\times 2}_{s,0}$ denotes the set of the symmetric matrices of $\RR^{2\times 2}$ with zero trace.
\item For $\xi,\eta\in\RR^2$, $\dis \xi\otimes\eta:=\big[\xi_i\,\eta_j\big]_{1\leq i,j\leq 2}$ and
$\dis \xi\odot\eta:={1\over 2}\left(\xi\otimes\eta+\eta\otimes\xi\right)$.
\item For $u\in C^1(\RR^2)$, the partial derivatives $\dis {\partial u\over\partial x}$ and $\dis {\partial u\over\partial y}$ are respectively denoted $\partial_x u$ and $\partial_y u$. Moreover, the gradient of $u$ is denoted $\dis \nabla u=\begin{pmatrix}\partial_x u \\ \partial_y u \end{pmatrix}$.
\item For $u\in C^2(\RR^2)$, the Hessian matrix of $u$ is denoted
$\nabla^2 u:=\begin{pmatrix}\partial^2_{xx}u & \partial^2_{xy}u \\ \partial^2_{yx}u & \partial^2_{yy}u \end{pmatrix}.$ 
\item For $U=\begin{pmatrix}U_x \\ U_y\end{pmatrix}\in C^1(\RR^2)^2$, the curl of $U$ is $\curl\,U:=\partial_x U_y-\partial_y U_x$, the gradient of $U$ is $DU:=\begin{pmatrix}\partial_x U_x & \partial_x U_y \\ \partial_y U_x & \partial_y U_y \end{pmatrix}$, 
and the strain tensor $e(U)$ is defined by
\beq\label{e(U)}
e(U):={1\over 2}\left(DU+DU^T\right)=\begin{pmatrix}\partial_x U_x & {1\over 2}\left(\partial_x U_y+\partial_y U_x\right)
\\
{1\over 2}\left(\partial_x U_y+\partial_y U_x\right) & \partial_y U_y
\end{pmatrix}.
\eeq
\item For $\Si=\begin{pmatrix}\Si_{xx} & \Si_{xy} \\ \Si_{yx} & \Si_{yy}\end{pmatrix}\in C^1(\RR^d)^{2\times 2}$, the divergence of $\Si$ is
$\Div\left(\Si\right):=\begin{pmatrix}\partial_x\Si_{xx}+\partial_y\Si_{yx} \\ \partial_x\Si_{xy}+\partial_y\Si_{yy}\end{pmatrix}$.
\end{itemize}
\section{Local realizability for a non-vanishing field}\label{s.locrea}
Let $U:\RR^2\to\RR^2$, $U=(U_x,U_y)$, be a regular divergence free field, and let $X_*\in\RR^2$. The question is to know if the strain field $e(U)$ is {\em isotropically realizable} for the Stokes equation in the neighborhood of the point $X_*$. More precisely, does there exist a neighborhood $\Om$ of $X_*$, a positive continuous viscosity $\mu$ in $\Om$ and a continuous pressure $p$ in $\Om$ such that
\beq\label{Stokes}
-\,\Div\,\big(\mu\,e(U)\big)+\nabla p=0\quad\mbox{in }\Om?
\eeq
The following result provides a sufficient condition of local realizability.
\begin{Thm}\label{thm.isorea}
Let $U$ be a divergence free field in $C^4(\RR^2)^2$, and let $X_*\in\RR^2$ be such that
\beq\label{e(U)­0}
e(U)(X_*)\neq 0.
\eeq
Then, there exist an open neighborhood $\Om$ of $X_*$, a positive function $\mu$ in $C^0(\Om)$ and a function $p$ in $C^0(\Om)$, such that the vector field $U$ solves the Stokes equation \eqref{Stokes} in $\Om$.
\end{Thm}
\noindent
{\bf Proof of Theorem~\ref{thm.isorea}.} By a translation we are led to $X_*=(0,0)$. Let $\Om$ be an open disk of positive radius centered on $(0,0)$. Since $U$ is divergence free, it can be written
\beq\label{Uu}
U=R_{\perp}\nabla u= \begin{pmatrix}-\,\partial_y u \\ \partial_x u \end{pmatrix},\quad\mbox{where }u\in C^5(\bar\Om).
\eeq
Due to the condition \eqref{e(U)­0} combined with the regularity of $U$, we may choose $\Om$ such that
\beq\label{e(U)­0a}
\partial^2_{xy}u\neq 0\;\;\mbox{in }\bar\Om\quad\mbox{or}\quad\partial^2_{xx}u-\partial^2_{yy}u\neq 0\;\;\mbox{in }\bar\Om.
\eeq
Moreover, the change of variables $u'(x',y')=u(x'+y',x'-y')$ yields
\beq
\partial^2_{xy}u={1\over 2}\left(\partial^2_{x'x'}u'-\partial^2_{y'y'}u'\right).
\eeq
Hence, the first condition of \eqref{e(U)­0a} leads us to the second one. Therefore, from now one we assume that (up to change $u$ in $-u$)
\beq\label{e(U)­0b}
\partial^2_{xx}u-\partial^2_{yy}u>0\quad\mbox{in }\bar\Om.
\eeq
\par
The proof is now divided in three steps. In the first step, from two suitable solutions $v^+,v^-$ of second-order hyperbolic equations we build a continuous viscosity $\mu$ in the neighborhood of the point $(0,0)$. In the second step, we prove the existence of a function $v^+$ for $x>0$. The third step is devoted to the existence of a function $v^-$ for $x<0$.
\par\bs\noindent
{\it First step: Construction of an admissible viscosity $\mu$.}
\par\noindent
Assume for the moment that there exist a number $\tau>0$ with $[-\tau,\tau]^2\subset\Om$, and two functions $v^+\in C^2\big([0,\tau]\times[-\tau,\tau]\big)$, $v^-\in C^2\big([-\tau,0]\times[-\tau,\tau]\big)$ satisfying
\beq\label{v+-}
\left\{\ba{lll}
\partial^2_{xy}v^+>0 & \mbox{in }[0,\tau]\times[-\tau,\tau]
\\ \ecart
\partial^2_{xy}v^->0 & \mbox{in }[-\tau,0]\times[-\tau,\tau]
\\ \ecart
\partial^2_{xy}v^+(0,\cdot)=\partial^2_{xy}v^-(0,\cdot) & \mbox{in }[-\tau,\tau]
\\ \ecart
\partial^2_{yy}v^+(0,\cdot)=\partial^2_{yy}v^-(0,\cdot) & \mbox{in }[-\tau,\tau],
\ea\right.
\eeq
and such that the divergence free functions $V^+:=R_{\perp}\nabla v^+$ and $V^-:=R_{\perp}\nabla v^-$ satisfy
\beq\label{V+-}
e(U):e(V^+)=0\;\;\mbox{in }[0,\tau]\times[-\tau,\tau]\quad\mbox{and}\quad e(U):e(V^-)=0\;\;\mbox{in }[-\tau,0]\times[-\tau,\tau].
\eeq
Then, define the function~$\mu$ by
\beq\label{muv+-}
\mu:=\left\{\ba{ll}
\dis -\,{2\,\partial_{x}V^+_x\over\partial_{x}U_y+\partial_{y}U_x}={2\,\partial^2_{xy}v^+\over\partial^2_{xx}u-\partial^2_{yy}u}
& \mbox{in }[0,\tau]\times[-\tau,\tau]
\\ \ecart
\dis -\,{2\,\partial_{x}V^-_x\over\partial_{x}U_y+\partial_{y}U_x}={2\,\partial^2_{xy}v^-\over\partial^2_{xx}u-\partial^2_{yy}u}
& \mbox{in }[-\tau,0)\times[-\tau,\tau],
\ea\right.
\eeq
which is continuous and positive in $[-\tau,\tau]^2$ by virtue of \eqref{e(U)­0b} and \eqref{v+-}.
Also define the function~$p$ by
\beq\label{pv+-}
p:=\left\{\ba{ll}
\mu\,\partial_{x}U_x-\partial_{y}V^+_x=-\,\mu\,\partial^2_{xy}u+\partial^2_{yy}v^+
& \mbox{in }[0,\tau]\times[-\tau,\tau]
\\ \ecart
\mu\,\partial_{x}U_x-\partial_{y}V^-_x=-\,\mu\,\partial^2_{xy}u+\partial^2_{yy}v^-
& \mbox{in }[-\tau,0)\times[-\tau,\tau],
\ea\right.
\eeq
which is continuous in $[-\tau,\tau]^2$ by the fourth condition of \eqref{v+-}.
\par
By the free divergence of $U$, \eqref{V+-} and the definition \eqref{muv+-} of $\mu$, we have
\beq\label{pUV}
\ba{ll}
\dis p-\mu\,\partial_{y}U_y-\partial_{x}V^\pm_y & =\mu\,\partial_{x}U_x-\partial_{y}V^\pm_x-\mu\,\partial_{y}U_y-\partial_{x}V^\pm_y
\\ \ecart
& \dis =-\,{4\,\partial_{x}U_x\,\partial_{x}V^\pm_x\over\partial_{x}U_y+\partial_{y}U_x}-\partial_{y}V^\pm_x-\partial_{x}V^\pm_y
\\ \ecart
& \dis =-\,{2\,e(U):e(V^\pm)\over\partial_{x}U_y+\partial_{y}U_x}=0.
\dis 
\ea
\eeq
Hence, from \eqref{muv+-}, \eqref{pv+-} and \eqref{pUV} we deduce that $U$ and $V$ are solutions of the system
\beq\label{sysUV}
\left\{\ba{ll}
-\,\mu\,\partial_{x}U_x+p & =-\,\partial_{y}V^\pm_x
\\ \ecart
\dis -\,{\mu\over 2}\left(\partial_{x}U_y+\partial_{y}U_x\right) & =\partial_{x}V^\pm_x
\\ \ecart
\dis -\,{\mu\over 2}\left(\partial_{x}U_y+\partial_{y}U_x\right) & =-\,\partial_{y}V^\pm_y
\\ \ecart
-\,\mu\,\partial_{y}U_y+p & =\partial_{x}V^\pm_y
\ea\right.
\quad\mbox{in }\big([0,\tau]\times[-\tau,\tau]\big)\cup\big([-\tau,0]\times[-\tau,\tau]\big),
\eeq
which is equivalent to
\beq
-\,\mu\,e(U)+p\,I_2=R_{\perp}DV^\pm\quad\mbox{in }\big([0,\tau]\times[-\tau,\tau]\big)\cup\big([-\tau,0]\times[-\tau,\tau]\big).
\eeq
Therefore, we get that
\beq\label{Sto}
-\,\Div\,\big(\mu\,e(U)\big)+\nabla p=\Div\left(R_{\perp}DV^\pm\right)=0
\quad\mbox{in }\big((0,\tau)\times(-\tau,\tau)\big)\cup\big((-\tau,0)\times(-\tau,\tau)\big).
\eeq
This combined with the continuity of the strain tensor $\mu\,e(U)$ and the pressure $p$ at the interface $\{0\}\times[-\tau,\tau]$ implies that $U$ is solution of the Stokes equation \eqref{Stokes} replacing $\Om$ by $(-\tau,\tau)^2$.
\par\bs\noindent
{\it Second step: Existence of a function $v^+$ for $x\geq 0$.}
\par\noindent
Recall that $\Om$ is a regular simply connected neighborhood of $(0,0)$.
Let $a$ be the function defined by
\beq\label{au}
a:=-\,{2\,\partial_x U_x\over \partial_x U_y+\partial_y U_x}={2\,\partial^2_{xy}u\over\partial^2_{xx}u-\partial^2_{yy}u}
\quad\mbox{in }\bar\Om,
\eeq
and let $A$ be the matrix-valued function defined by
\beq\label{Au}
A:=\begin{pmatrix} \al & 0 \\ \ga & \be \end{pmatrix}\quad\mbox{in }\Om,\quad\mbox{where}\quad
\left\{\ba{l}\al:=a-\sqrt{a^2+1} \\ \ecart \be:=a+\sqrt{a^2+1} \\ \ecart \ga:=-\,\partial_x\al-\be\,\partial_y\al.\ea\right.
\eeq
Consider the semilinear hyperbolic system given for $V=\begin{pmatrix} v \\ w \end{pmatrix}$ by
\beq\label{semhyp}
\partial_x V+A\,\partial_y V=\begin{pmatrix} \partial_x v+\al\,\partial_y v \\ \partial_x w+\be\,\partial_y w+\ga\,\partial_y v  \end{pmatrix}
=\begin{pmatrix} w \\ 0 \end{pmatrix}.
\eeq
System \eqref{semhyp} is strictly hyperbolic since the eigenvalues of $A$ satisfy $\al<\be$. As $u$ is in $C^5(\bar\Om)$, the matrix-valued function $A$ belongs to $C^2(\bar\Om)^{2\times 2}$. Moreover, we can extend the function $a$ in $\RR^2\setminus\bar\Om$ to a function in $C^3_b(\RR^2)$ ({\em i.e.} all the derivatives until order $3$ of the function $a$ are bounded in $\RR^2$), still denoted by $a$. Similarly, $A$ can be extended to a function in $C^2_b(\RR^2)^{2\times 2}$, still denoted by $A$. Let $c>0$ be a constant such that
\beq\label{abc}
|\al|+|\be|\leq c\quad\mbox{in }\RR^2,
\eeq
As a consequence, the characteristics associated with system \eqref{semhyp}, $Y(t;x,y)$ and $Z(t;x,y)$ solutions of the ordinary differential equations
\beq\label{YZ}
\left\{\ba{ll}
\dis {\partial Y\over\partial t}(t;x,y)=\al\big(t,Y(t;x,y)\big)
\\ \ecart
Y(x;x,y)=y,
\ea\right.
\left\{\ba{ll}
\dis {\partial Z\over\partial t}(t;x,y)=\be\big(t,Y(t;x,y)\big)
\\ \ecart
Z(x;x,y)=y,
\ea\right.
\quad\mbox{for }t\in\RR,
\eeq
define two functions in $C^3(\RR^3)$ (see, {\em e.g.}, \cite{HSD} Chapter~17).
Let $\D_c$ be the domain defined by
\beq\label{rDc}
\D_c:=\big\{(x,y)\in\RR^2\,:\,x\geq 0,\ -1+cx\leq y\leq 1-cx\big\},
\eeq
and let $v_0$, $w_0$ be two prescribed functions in $C^2\big([-1,1]^2\big)$. Then, by the Theorems~3.1 and~3.6 of \cite{Bre} there exists a unique solution $V$ in $C^2(\D_c)^2$, defined along the characteristics, of the hyperbolic system \eqref{semhyp} with the initial condition
\beq\label{V(0)}
V(0,y)=\begin{pmatrix}v_0(y) \\ w_0(y)\end{pmatrix}\quad\mbox{for }y\in [-1,1].
\eeq
Since by \eqref{semhyp} and \eqref{YZ}
\beq
{d\over dt}\left[v\big(t,Y(t;x,y)\big)\right]=w\big(t,Z(t;x,y)\big),\quad
{d\over dt}\left[w\big(t,Z(t;x,y)\big)\right]=-\,(\ga\,\partial_y v)\big(t,Z(t;x,y)\big),
\eeq
we get, taking into account \eqref{V(0)} and choosing $t=x$, the following integral representation of the solution $V$,
\beq\label{vw}
\left\{\ba{ll}
\dis v(x,y)=v_0\big(Y(0;x,y)\big)+\int_0^x w\big(s,Y(s;x,y)\big)\,ds
\\ \ecart
\dis w(x,y)=w_0\big(Z(0;x,y)\big)-\int_0^x (\ga\,\partial_y v)\big(s,Z(s;x,y)\big)\,ds,
\ea\right.
\quad\mbox{for any }(x,y)\in\D_c.
\eeq
Moreover, we have
\beq
\left\{\ba{ll}
\dis {\partial(\partial_x Y)\over\partial t}(t;x,y)=\partial_y\al\big(t,Y(t;x,y)\big)\,\partial_x Y(t;x,y)
\\ \ecart
\dis \al(x,y)+\partial_x Y(x;x,y)={\partial\over\partial x}\big(Y(x;x,y)\big)=0,
\ea\right.
\quad\mbox{for }t\in\RR,
\eeq
and similarly
\beq
\left\{\ba{ll}
\dis {\partial(\partial_y Y)\over\partial t}(t;x,y)=\partial_y\al\big(t,Y(t;x,y)\big)\,\partial_y Y(t;x,y)
\\ \ecart
\dis \partial_y Y(x;x,y)=1,
\ea\right.
\quad\mbox{for }t\in\RR.
\eeq
Hence, it follows that
\beq\label{Yxy}
\left\{\ba{l}
\dis \partial_x Y(t;x,y)=-\,\al(x,y)\,\exp\left(\int_x^t\partial_y\al\big(s;Y(s;x,y)\big)\,ds\right)
\\ \ecart
\dis \partial_y Y(t;x,y)=\exp\left(\int_x^t\partial_y\al\big(s;Y(s;x,y)\big)\,ds\right).
\ea\right.
\quad\mbox{for }t\in\RR.
\eeq
\par
On the other hand, consider any constant $\tau>0$ such that
\beq\label{taurDc}
[0,\tau]\times[-\tau,\tau]\subset\D_c.
\eeq
Also denote the function $v$ by $v^+$.
Then, \eqref{vw} combined with \eqref{Yxy} yields
\beq\label{dv+(0,y)}
\left\{\ba{l}
\partial_x v^+(0,y)=w_0(y)-\al(0,y)\,v_0'(y)
\\ \ecart
\partial_y v^+(0,y)=v_0'(y),
\ea\right.
\quad\mbox{for }y\in [-\tau,\tau].
\eeq
Moreover, the hyperbolic system \eqref{semhyp} combined with \eqref{Au}  implies that
\beq
\ba{ll}
0 & =\partial_x\left(\partial_x v^++\al\,\partial_y v^+\right)+\be\,\partial_y\left(\partial_x v^++\al\,\partial_y v^+\right)+\ga\,\partial_y v^+
\\ \ecart
& =\partial^2_{xx}v^++\al\be\,\partial^2_{yy}v^++\left(\al+\be\right)\,\partial^2_{xy}v^+,
\ea
\eeq
which yields the equation
\beq\label{equv+}
\partial^2_{xx}v^+-\partial^2_{yy}v^++2a\,\partial^2_{xy}v^+=0\quad\mbox{in }[0,\tau]\times[-\tau,\tau].
\eeq
Finally, defining the function $V^+:=R_{\perp}\nabla v^+$ and using the definition of $a$ in \eqref{au}, we deduce from \eqref{equv+} that
\beq\label{equV+}
\ba{ll}
e(U):e(V^+)
& \dis  = 2\,\partial_x U_x\,\partial_x V^+_x+{1\over 2}\left(\partial_x U_y+\partial_y U_x\right)\left(\partial_{x}V^+_y+\partial_{y}V^+_x\right)
\\ \ecart
& \dis = {1\over 2}\left(\partial_x U_y+\partial_y U_x\right)\left(-\,2a\,\partial_x V^+_x+\partial_{x}V^+_y+\partial_{y}V^+_x\right)
\\ \ecart
& \dis = {1\over 2}\left(\partial_x U_y+\partial_y U_x\right)\left(2a\,\partial^2_{xy}v^++\partial^2_{xx}v^+-\partial^2_{yy}v^+\right)=0
\;\;\mbox{in }[0,\tau]\times[-\tau,\tau],
\ea
\eeq
which corresponds to the first equation of \eqref{V+-}.
\par\bs\noindent
{\it Third step: Existence of a function $v^-$ for $x\leq 0$.}
\par\noindent
Define $\tilde{a}(x,y):=-\,a(-x,y)$ for $x\geq 0$ and $y\in\RR$, and let $\tilde{\al}$, $\tilde{\be}$ be the functions defined from $\tilde{a}$  as in formula \eqref{Au}. We can also assume that $\tilde{\al}$ and $\tilde{\be}$ satisfy the bound \eqref{abc} with some constant $c>0$. Then, following the approach of the second step, we get that for any $\tilde{v}_0$, $\tilde{w}_0$ in $C^2\big([-1,1]\big)$, there exists a function $\tilde{v}\in C^2(\D_c)$ satisfying for some $\tau>0$ of \eqref{taurDc},
\beq\label{dtv(0,y)}
\left\{\ba{l}
\partial_x\tilde{v}(0,y)=\tilde{w}_0(y)-\tilde{\al}(0,y)\,\tilde{v}_0'(y)
\\ \ecart
\partial_y\tilde{v}(0,y)=\tilde{v}_0'(y),
\ea\right.
\quad\mbox{for }y\in [-\tau,\tau],
\eeq
and
\beq\label{equtv}
\partial^2_{xx}\tilde{v}-\partial^2_{yy}\tilde{v}+2\tilde{a}\,\partial^2_{xy}\tilde{v}=0\quad\mbox{in }[0,\tau]\times[-\tau,\tau].
\eeq
Defining $v^-(x,y):=\tilde{v}(-x,y)$, for $(x,y)\in [-\tau,0]\times[-\tau,\tau]$, we thus deduce from \eqref{dtv(0,y)} and \eqref{equtv} that
\beq\label{dv-(0,y)}
\left\{\ba{l}
\partial_x v^-(0,y)=-\,\tilde{w}_0(y)+\tilde{\al}(0,y)\,\tilde{v}_0'(y)
\\ \ecart
\partial_y v^-(0,y)=\tilde{v}_0'(y),
\ea\right.
\quad\mbox{for }y\in [-\tau,\tau],
\eeq
and
\beq\label{equv-}
\partial^2_{xx} v^--\partial^2_{yy} v^-+2a\,\partial^2_{xy} v^-=0\quad\mbox{in }[-\tau,0]\times[-\tau,\tau].
\eeq
Moreover, as for \eqref{equv+} the equation \eqref{equv-} is equivalent to the second equation of \eqref{V+-} with $V^-:=R_{\perp}\nabla v^-$.
\par
Finally, choose $v_0(y)=\tilde{v}_0(y)=0$, $w_0(y)=y$ and $\tilde{w}_0(y)=-y$, for $y\in[-\tau,\tau]$. On the one hand, by \eqref{dv+(0,y)} and \eqref{dv-(0,y)} we have
\beq
\partial^2_{xy} v^+(0,y)=\partial^2_{xy} v^-(0,y)=1,\quad\partial^2_{yy} v^+(0,y)=\partial^2_{yy} v^-(0,y)=0,
\quad\mbox{for }y\in[-\tau,\tau],
\eeq
so that the two last equations of \eqref{v+-} are satisfied. On the other hand, since
\[
\partial^2_{xy} v^+(0,0)=\partial^2_{xy} v^-(0,0)=1,
\]
we can take $\tau>0$ small enough such that the two inequalities of \eqref{v+-} also hold. Therefore, the proof of Theorem~\ref{thm.isorea} is complete. \cqfd
\section{Global realizability under a small perturbation}\label{s.glorea}
\subsection{The main result}
Contrary to \cite{BrMi,BMT} for fields in Electrostatics, we have not succeeded for the moment to prove a realizability result for strain fields in the whole space or in the torus. The difficulty is strongly linked to the derivation of global regular solutions to nonlinear wave equations. However, we have obtained a nearly global perturbation result for any periodic regular strain field sufficiently close to its average:
\begin{Thm}\label{thm.isoreaper}
Let $M$ be a matrix in $\RR^{2\times 2}$ with zero trace and $M+M^T\neq 0$.
Let $U$ be a divergence free function in $C^5(\RR^2)^2$ such that $X\mapsto U(X)-MX$ is $Y$-periodic.
For any $R>0$, there exists $\ep>0$ only depending on $R$ such that if
\beq\label{UMep}
\big\|U(X)-MX\big\|_{C^5_\sharp(Y)^{2\times 2}}<\ep,
\eeq
then the field $e(U)$ is isotropically realizable in the open disk $D(0,R)$ with a positive viscosity $\mu_R\in C^2\big(D(0,R)\big)$.
We can construct an admissible viscosity $\mu_R$ so that, for any $S>R>0$, if \eqref{UMep} holds for $R$ and $S$, then $\mu_S$ agrees with $\mu_R$ in $D(0,R)$.
Moreover, if $\ep$ is bounded from below by a positive constant independent of $R$, the strain field $e(U)$ is isotropically realizable in the whole space $\RR^2$ with a positive viscosity $\mu\in C^2(\RR^2)$.
\end{Thm}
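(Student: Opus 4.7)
The plan is to parametrize the viscosity as $\mu=e^u$ and convert the Stokes realizability condition on the simply connected disk $D(0,R)$ into the semilinear wave equation
\beq\label{planEq}
b(\partial^2_{xx}u-\partial^2_{yy}u)-2a\,\partial^2_{xy}u+N(X)\cdot\nabla u+b\big((\partial_xu)^2-(\partial_yu)^2\big)-2a\,\partial_xu\,\partial_yu+S(X)=0
\eeq
obtained by expanding $e^{-u}\,\curl[\Div(e^u\,e(U))]=0$, where $a=\partial_xU_x$, $b=\tfrac12(\partial_xU_y+\partial_yU_x)$ are the entries of $e(U)$, while the vector coefficient $N(X)$ and the scalar source $S(X)$ involve only derivatives of $U$ up to order $2$. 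The principal symbol $b\xi^2-2a\xi\eta-b\eta^2$ has discriminant $4(a^2+b^2)$, so \eqref{planEq} is strictly hyperbolic wherever $e(U)\neq 0$. Since $e(M)=\tfrac12(M+M^T)\neq 0$ and since the periodicity of $DU$ combined with \eqref{UMep} controls $e(U)-e(M)$ uniformly on $\RR^2$, taking $\ep$ small forces \eqref{planEq} to be strictly hyperbolic throughout $\RR^2$ with characteristic speeds both bounded above and bounded away from zero.

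By an orthogonal change of coordinates I normalize $e(M)$ to $a(M)=0$, $b(M)=\lambda\neq 0$; then $\{x=0\}$ is non-characteristic and $u\equiv 0$ (giving $\mu\equiv 1$) solves \eqref{planEq} in the unperturbed case $U\equiv MX$. I pose the Cauchy data $u(0,\cdot)=\partial_x u(0,\cdot)\equiv 0$ and invoke the classical local existence theory for strictly hyperbolic semilinear Cauchy problems (see \cite{Hor}) to obtain a unique $C^2$ solution $u_\ep$ on a maximal strip $(-T_\ep,T_\ep)\times\RR$. The heart of the argument, and the \textbf{main obstacle}, is a quantitative lifespan estimate $T_\ep\to\infty$ as $\ep\to 0$. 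The mechanism is that $N$ and $S$ are $O(\ep)$ uniformly in $X$ (this is where the $C^5$ hypothesis on $U$ enters) while the genuine nonlinearity is quadratic in $\nabla u$, so \eqref{planEq} is a small perturbation of the constant-coefficient linear wave $\lambda(\partial^2_{xx}u-\partial^2_{yy}u)=0$ that fixes $u\equiv 0$; a Gronwall-type bootstrap on a high-order norm (say $C^4$) keeps $u_\ep$ and $\nabla u_\ep$ small on intervals of length $T_\ep\gg 1$. For each prescribed $R>0$ I then pick $\ep_R$ small enough that $u_{\ep_R}$ is $C^2$ on a strip strictly containing $D(0,R)$.

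Setting $\mu_R:=e^{u_{\ep_R}}>0$ in $C^2(D(0,R))$, the equation $\curl[\Div(\mu_R\,e(U))]=0$ on the simply connected disk together with Poincar\'e's lemma produces a pressure $p_R\in C^1(D(0,R))$ for which the Stokes equation \eqref{Sequi} holds. When $S>R$ and \eqref{UMep} also holds with $\ep=\ep_S$, uniqueness of the Cauchy problem combined with the finite propagation speed forces $u_{\ep_S}=u_{\ep_R}$ throughout $D(0,R)$, hence $\mu_S|_{D(0,R)}=\mu_R$, which is the claimed consistency. Finally, if $\ep_R$ can be bounded below uniformly in $R$, a common choice of $\ep$ produces a consistent family $(\mu_{R_n})_n$ along any $R_n\uparrow\infty$, hence a single $\mu\in C^2(\RR^2)$ realizing $e(U)$ globally; the inability to rule out $\ep_R\to 0$ as $R\to\infty$ is exactly the quantitative gap that ties this result to the alternative of Proposition~\ref{pro.alt}.
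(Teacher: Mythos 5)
Your overall strategy --- writing $\mu=e^u$ and reducing realizability on the simply connected disk to the semilinear hyperbolic equation \eqref{slwequ} --- coincides with the paper's, and your displayed equation is the paper's equation up to swapping the roles of $a$ and $b$ (one small slip: the source $S=\tfrac12\,\De(\curl U)$ involves \emph{third} derivatives of $U$, not second; harmless since $U\in C^5$). Where you genuinely diverge is in how the hyperbolic problem is then solved. The paper (i) globally straightens the characteristics (Lemma~\ref{lem.canonic}, via Hadamard's theorem) so as to reach the flat d'Alembertian, (ii) replaces the periodic coefficients by compactly supported truncations agreeing with them on $D(0,R)$ (Lemma~\ref{lem.trunc} and \eqref{f=[f]n}), so that the time-integrals of the coefficients of \eqref{slweqwn} are finite, and (iii) invokes G\'erard's \emph{global} existence result (Lemma~\ref{lem.G}), whose smallness hypothesis bears only on the integrated source. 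You instead keep the variable-coefficient principal part and the non-decaying periodic lower-order terms, pose zero Cauchy data on the non-characteristic line $\{x=0\}$, and ask for a lifespan estimate $T_\ep\to\infty$. This is a legitimate alternative, and it makes the consistency $\mu_S=\mu_R$ on $D(0,R)$ essentially free: your $u_{\ep_R}$ and $u_{\ep_S}$ are restrictions of one and the same maximal solution (the equation and data depend only on $U$, not on $\ep$), whereas the paper has to argue through uniqueness for the family of truncated equations.

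The caveat is that the lifespan estimate you yourself flag as ``the heart of the argument'' is asserted, not established, and it is precisely the analytic content that the canonical form, the truncation, and Lemma~\ref{lem.G} are designed to deliver. The difficulty is not negligible: the quadratic coefficient in $\nabla u$ is $O(1)$ (it is essentially $e(U)R_\perp$ itself, not a perturbation), only $N$ and $S$ are $O(\ep)$, and none of the coefficients decay in the time-like variable, so $\int_0^T\|N\|_{L^\infty}\,dt\sim\ep\,T$ and the naive bootstrap $y'=Cy^2+C\ep y+C\ep$, $y(0)=0$, only yields $T_\ep\gtrsim\ep^{-1/2}$. That is still enough for the theorem (choose $\ep_R$ so that $T_{\ep_R}>R$), and estimates of this kind are indeed in H\"ormander's Chapter~6, but one must actually run the commutator/energy (or Riemann-invariant) argument for a strictly hyperbolic operator with variable principal coefficients --- work of the same order as the proof of Lemma~\ref{lem.G}. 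So: correct architecture, genuinely different and in places cleaner route, but the single load-bearing estimate is left as a sketch where the paper supplies a precisely stated lemma together with the (nontrivial) verification of its hypotheses.
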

The proof of Theorem~\eqref{thm.isoreaper} is based on the following perturbation result communicated by P. G\'erard \cite{Ger}, which already explains the limitation of the realizability result:
\begin{Lem}\label{lem.G}
Let $H(t,z,\la)$ be a polynomial of degree two in $\la\in\RR^2$,
\[
H(t,z,\la)=B(t,z)\la\cdot\la+V(t,z)\cdot\la+h(t,z)\quad\mbox{for }(t,z)\in\RR^2,\ \la\in\RR^2,
\]
the coefficients of which are in $C^2(\RR^2)$ in the variables $(t,z)\in\RR^2$.
Assume that
\beq\label{H1}
\La:=\int_\RR\left(\|B(t,\cdot)\|_{L^\infty(\RR)}+\|V(t,\cdot)\|_{L^\infty(\RR)}\right)dt<\infty.
\eeq
Then, there exists a positive constant $\ep$ depending on $\La$ such that, if
\beq\label{H2}
\int_{\RR}\|h(t,\cdot)\|_{L^\infty(\RR)}\,dt<\varepsilon,
\eeq
the semilinear wave equation
\beq\label{slwaeqG}
\left\{\ba{ll}
\square w(t,z)=\partial^2_{tt}w(t,z)-\partial^2_{zz}w(t,z)=H\big(t,z,\nabla w(t,z)\big), & (t,z)\in\RR^2
\\ \ecart
w(0,z)=\partial_t w(0,z)=0, & z\in\RR,
\ea\right.
\eeq
has a unique global solution $w$ in $C^1(\RR^2)$.
\end{Lem}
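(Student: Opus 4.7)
The plan is to reduce the semilinear wave equation \eqref{slwaeqG} to a pair of transport equations along the null characteristics of $\square$, recast them as a system of Volterra-type integral equations amenable to Picard iteration, and then close a bootstrap on $\|\nabla w(t,\cdot)\|_{L^\infty(\RR)}$ using the smallness of $\int_\RR\|h(t,\cdot)\|_{L^\infty}\,dt$. Introducing the Riemann invariants $u_\pm:=\partial_t w\pm\partial_z w$, one checks that $(\partial_t\mp\partial_z)\,u_\pm=\square w=H(t,z,\nabla w)$ with $u_\pm(0,\cdot)=0$ from the zero Cauchy data, while $\nabla w$ is recovered from $(u_+,u_-)$ via $\partial_t w=(u_++u_-)/2$ and $\partial_z w=(u_+-u_-)/2$. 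Integrating along the characteristic curves $s\mapsto(s,z\pm(t-s))$ between $s=0$ and $s=t$ yields the integral representation
\[
u_\pm(t,z)=\int_0^t H\bigl(s,\,z\pm(t-s),\,\nabla w(s,z\pm(t-s))\bigr)\,ds,
\]
and since $H$ is polynomial of degree two in its last argument with $C^2$ coefficients, a routine Picard iteration in $C^0\bigl([-T,T]\times\RR\bigr)^2$ delivers a unique $C^1$ solution $w$ on some maximal interval $(-T_-,T_+)$, the only possible obstruction to globality being a blow-up of $\|\nabla w(t,\cdot)\|_{L^\infty}$ as $t\to T_\pm$.

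To rule out such a blow-up, I would introduce
\[
N(t):=\sup\bigl\{|u_+(s,z)|+|u_-(s,z)|:z\in\RR,\ 0\leq s\leq t\bigr\},
\]
so that $|\nabla w(s,\cdot)|\leq N(s)$ on $[0,t]$, and bound the integrand by $\psi(s)|\nabla w|^2+\phi(s)|\nabla w|+\rho(s)$, where $\psi(s):=\|B(s,\cdot)\|_{L^\infty(\RR)}$, $\phi(s):=\|V(s,\cdot)\|_{L^\infty(\RR)}$ and $\rho(s):=\|h(s,\cdot)\|_{L^\infty(\RR)}$ all lie in $L^1(\RR)$ by \eqref{H1}--\eqref{H2}. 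The representation formula then yields
\[
N(t)\leq C\int_0^t\bigl[\psi(s)\,N(s)^2+\phi(s)\,N(s)+\rho(s)\bigr]\,ds,
\]
and the bootstrap consists in postulating $N\leq 1$ on a subinterval, which allows the replacement $N^2\leq N$ and reduces the estimate to the linear Gronwall form $N(t)\leq C\int_0^t(\psi+\phi)N+C\varepsilon$, hence $N(t)\leq Ce^{C\La}\,\varepsilon$. Choosing $\varepsilon=\varepsilon(\La):=(2Ce^{C\La})^{-1}$ forces $N\leq 1/2$ strictly; by continuity of $N$ on $[0,T_+)$ this strict bound propagates to the whole interval via the usual open-closed argument, so $T_+=+\infty$, and the case $t\leq 0$ is symmetric. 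Uniqueness follows by applying the same integral estimate to the difference of two solutions, whose right-hand side is linear with coefficients bounded by $C(\psi+\phi)$ on the set where $|\nabla w_i|\leq 1/2$.

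The only genuinely nontrivial step is the bootstrap closure: the threshold $N\leq 1$ must be selected so that the substitution $N^2\leq N$ effectively absorbs the quadratic term, and $\varepsilon$ must then be calibrated to outweigh the Gronwall factor $e^{C\La}$. This coupling is precisely the origin of the dependence $\varepsilon=\varepsilon(\La)$, and it is also the structural source of the smallness restriction that ultimately prevents Theorem~\ref{thm.isoreaper} from yielding a fully global realizability result in $\RR^2$.
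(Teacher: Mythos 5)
Your argument is correct in substance, but it follows a genuinely different route from the paper. The paper does not prove Lemma~\ref{lem.G} from scratch: it attributes the statement to P.~G\'erard and indicates that existence follows by adapting the \emph{energy integral method} of H\"ormander \cite{Hor}, Chap.~VI, with uniqueness from Theorem~6.4.10 there; the smallness condition \eqref{H2} enters through the Gronwall-type lifespan estimates of that method. You instead exploit the one-dimensional structure directly: the d'Alembert factorization into Riemann invariants $u_\pm=\partial_t w\pm\partial_z w$, the pointwise Volterra representation along the null characteristics, and an $L^\infty$ bootstrap in which the threshold $N\leq 1$ lets the quadratic term be absorbed into the linear one, after which Gronwall and the choice $\ep\sim e^{-C\La}$ trap $N$ below $1/2$ and the open--closed continuation argument gives $T_+=\infty$. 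This is more elementary and self-contained than the energy method, gives an explicit dependence $\ep(\La)$, and makes transparent why the smallness of $\int_\RR\|h(t,\cdot)\|_{L^\infty}\,dt$ (and not of $\La$) is what controls the lifespan --- exactly the structural obstruction discussed after Theorem~\ref{thm.isoreaper}; its price is that it is strictly one-dimensional, whereas the energy method generalizes. Two points in your write-up deserve tightening. First, uniqueness must hold among \emph{all} $C^1$ solutions, not only those with $|\nabla w_i|\leq 1/2$: the correct fix is to run the difference estimate on a fixed characteristic triangle, where $\sup(|\nabla w_1|+|\nabla w_2|)$ is finite by compactness, so the difference satisfies a linear Gronwall inequality with locally bounded coefficients and vanishes. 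Second, recovering $w\in C^1$ from the continuous pair $(u_+,u_-)$ requires the compatibility identity $\partial_t(u_+-u_-)=\partial_z(u_++u_-)$, obtained by subtracting the two transport equations, so that $\partial_z\int_0^t\tfrac12(u_++u_-)\,ds=\tfrac12(u_+-u_-)$; this is routine but should be stated, since it is what turns the first-order system back into a solution of \eqref{slwaeqG}.
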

\begin{Rem}
The proof of the existence is an adaptation of the energy integral method that can be found for instance in H\"ormander's book \cite{Hor}, Chap.~VI. Moreover, the uniqueness follows from Theorem~6.4.10 of \cite{Hor}.
On the other hand, the smallness condition \eqref{H2} is essential to control the lifespan of the solution in the Gronwall type estimates.
Indeed, P. G\'erard has also provided in \cite{Ger} an example of equation~\eqref{slwaeqG} whose coefficients have compact support (which thus implies condition~\eqref{H1}), and which leads to a blow-up in finite time when the coefficient $h$ is not small enough.
\end{Rem}
\noindent
{\bf Proof of Theorem~\ref{thm.isoreaper}.}
Let $M$ be a matrix in $\RR^{2\times 2}$ with zero trace and $M+M^T\neq 0$.
Let $U$ be a divergence free function in $C^5(\RR^2)^2$ such that $X\mapsto U(X)-MX$ is $Y$-periodic.
Eliminating the pressure term in \eqref{Stokes} the strain field $e(U)$ is isotropically realizable in a simply connected domain $\Om$ of $\RR^2$ with a positive viscosity $\mu\in C^2(\Om)$, if and only if
\beq\label{Stokes2}
\curl\left[\Div\,\big(\mu\,e(U)\big)\right]=0\quad\mbox{in }\Om.
\eeq
The natural idea is to search a suitable positive viscosity of type $\mu=e^u$. Then, a lengthy but easy computation shows that $\mu$ satisfies the Stokes equation \eqref{Stokes2}, if and only if $u$ is solution of the semilinear wave equation
\beq\label{slwequ}
A:\nabla^2 u=-A\nabla u\cdot\nabla u+R_\perp\De U\cdot\nabla u-{1\over 2}\,\De(\curl\,U)\quad\mbox{in }\Om,
\eeq
where
\beq\label{A}
A:=e(U)R_\perp=\begin{pmatrix}
{1\over 2}\left(\partial_x U_y+\partial_y U_x\right) & -\,\partial_x U_x
\\
-\,\partial_x U_x & -\,{1\over 2}\left(\partial_x U_y+\partial_y U_x\right)
\end{pmatrix}
=\begin{pmatrix} a & b \\ b & -\,a\end{pmatrix}.
\eeq
The proof is now divided in three steps. In the first step we globally transform the wave equation \eqref{slwequ} into a canonical form.  In the second step we truncate some of the coefficients in the modified equation \eqref{slwequ} by coefficients with compact support in view of applying Lemma~\ref{lem.G}. In the third step we use Lemma~\ref{lem.G} to conclude.
\par\ms\noindent
{\it First step: Global transformation into a canonical form.}
\par\noindent
Due to $M+M^T\neq 0$, up to make the linear change of variables $(x,y)\mapsto(x+y,x-y)$ which permits to commute the entries $a$ and $b$ of the matrix-valued function $A$ \eqref{A}, we can assume that $M_{12}+M_{21}\neq 0$, so that for $\ep$ small enough, estimate \eqref{UMep} holds with
\beq\label{ane0}
\forall\,(x,y)\in\RR^2,\quad |a(x,y)|>{1\over 2}\,|M_{12}+M_{21}|-\ep>0.
\eeq
The local transformation of the wave equation \eqref{slwequ} into a canonical form is classical (see, {\em e.g.}, \cite{Eva}, Section 7.2). The global transformation is perhaps less classical and is based on the following change of variables.
\begin{Lem}\label{lem.canonic}
Consider a matrix $M$ and a vector-valued function $U$ satisfying the assumptions of Theorem~\ref{thm.isoreaper}. Let $R(x,\xi)$ and $S(x,\eta)$, for $\xi,\eta\in\RR$, be the characteristics defined by
\beq\label{RS}
\left\{\ba{ll}
\partial_x R(x,\xi)=\al\big(x,R(x,\xi)\big), & x\in\RR,
\\
R(0,\xi)=\xi
\ea\right.
\quad\mbox{and}\quad
\left\{\ba{ll}
\partial_x S(x,\eta)=\be\big(x,S(x,\eta)\big), & x\in\RR,
\\
S(0,\eta)=\eta
\ea\right.
\eeq
where $\al\neq\be$ are defined from the matrix-valued function $A$ of \eqref{A} by
\beq\label{albe}
\al:={b-\sqrt{a^2+b^2}\over a}\quad\mbox{and}\quad\be:={b+\sqrt{a^2+b^2}\over a}.
\eeq
Then, for $\ep$ small enough in \eqref{UMep}, there exist two functions $\xi,\eta\in C^4(\RR^2)$ satisfying
\beq\label{xieta}
\forall\,(x,y)\in\RR^2,\quad y=R\big(x,\xi(x,y)\big)=S\big(x,\eta(x,y)\big),
\eeq
and such that the mapping $(\xi,\eta):(x,y)\mapsto\big(\xi(x,y),\eta(x,y)\big)$ is a $C^4$-diffeomorphism on $\RR^2$ mapping $(0,0)$ to $(0,0)$.
\end{Lem}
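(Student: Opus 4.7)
\bigskip
\noindent\textbf{Proof proposal for Lemma~\ref{lem.canonic}.}
The plan is to define $\xi$ and $\eta$ by inverting, at each fixed $x$, the one-dimensional characteristic flows $\xi\mapsto R(x,\xi)$ and $\eta\mapsto S(x,\eta)$, and then to promote the resulting map $\Phi:(x,y)\mapsto(\xi(x,y),\eta(x,y))$ from a local to a global $C^4$-diffeomorphism of $\RR^2$ via Hadamard's global inversion theorem. The perturbative assumption \eqref{UMep} is used in two essential places: to make the characteristic fields globally smooth and bounded, and to force the two families of characteristics to intersect exactly once.

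First I would observe that $a=\tfrac12(\partial_xU_y+\partial_yU_x)$ and $b=-\,\partial_xU_x$ are $Y$-periodic perturbations of the constants $a_M,b_M$ associated with $M$, and that by \eqref{ane0} the function $a$ stays bounded away from $0$ with constant sign. The formulas \eqref{albe} then produce $\al,\be\in C^4(\RR^2)$, bounded with bounded derivatives up to order $4$, such that $\al-\al_M$ and $\be-\be_M$ are $Y$-periodic and of order $\ep$ in $C^4$-norm, and $|\be-\al|=2\sqrt{a^2+b^2}/|a|\geq 2$ everywhere. The ODEs \eqref{RS} then admit unique global solutions $R,S\in C^4(\RR^2)$, with $\partial_\xi R>0$ and $\partial_\eta S>0$ obtained by exponentiating the linearised flow along characteristics, exactly as in \eqref{Yxy}. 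Hence for each fixed $x$ the map $\xi\mapsto R(x,\xi)$ is a $C^4$-diffeomorphism of $\RR$, and the implicit function theorem applied to $F(x,y,\xi):=R(x,\xi)-y$ yields $\xi\in C^4(\RR^2)$ with $y=R(x,\xi(x,y))$; analogously for $\eta$. Differentiating this identity gives $\partial_y\xi=1/\partial_\xi R$ and $\partial_x\xi=-\,\al(x,y)\,\partial_y\xi$, so that
\[
\det D\Phi(x,y)=\big(\be(x,y)-\al(x,y)\big)\,\partial_y\xi(x,y)\,\partial_y\eta(x,y)
\]
has constant sign and never vanishes, and $\Phi$ is a local $C^4$-diffeomorphism at every point.

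The main obstacle is to upgrade this pointwise invertibility to global bijectivity. For this I would establish that $\Phi$ is proper. Applying Gronwall to $R(x,\xi)-\xi-\al_M x=\int_0^x\big[\al(s,R(s,\xi))-\al_M\big]\,ds$ yields $|R(x,\xi)-\xi-\al_M x|\leq C\ep|x|$ and similarly for $S$. Inverting gives $|\xi(x,y)-(y-\al_M x)|\leq C\ep|x|$ and $|\eta(x,y)-(y-\be_M x)|\leq C\ep|x|$, so that
\[
\big|(\eta-\xi)(x,y)+(\be_M-\al_M)\,x\big|\leq 2C\ep\,|x|.
\]
For $\ep$ small compared to the gap $|\be_M-\al_M|\geq 2$, this forces $|x|\leq C'|\xi-\eta|$ and then $|y|\leq C''(|\xi|+|\eta|)$, so preimages of compact sets under $\Phi$ are compact. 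Hadamard's global inversion theorem therefore promotes the local $C^4$-diffeomorphism $\Phi$ to a global $C^4$-diffeomorphism $\RR^2\to\RR^2$, and $\Phi(0,0)=(0,0)$ is immediate from $R(0,\xi)=\xi$ and $S(0,\eta)=\eta$.

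The delicate point is precisely this global bijectivity: without a smallness hypothesis on $DU-M$, the two families of characteristics could in principle intersect more than once and $\Phi$ would fail to be injective. It is the Gronwall bound $O(\ep|x|)$ combined with the fixed non-degeneracy $|\be_M-\al_M|\geq 2$ that rules this out and delivers properness, hence the global diffeomorphism.
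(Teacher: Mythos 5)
Your proposal is correct and follows essentially the same route as the paper: define $\xi,\eta$ by inverting the characteristic flows via the implicit function theorem (using $\partial_\xi R>0$ from the exponentiated linearised flow), compute the non-vanishing Jacobian $(\be-\al)\,\partial_y\xi\,\partial_y\eta$, prove properness, and conclude by Hadamard's global inversion theorem. The only cosmetic difference is in the properness step, where you compare to the constant-coefficient characteristics with an $O(\ep|x|)$ error, while the paper bounds $\xi-\eta=\int_0^x(\be-\al)\,dt$ directly from below using the uniform gap $|\be-\al|\geq 2$; both deliver the same estimate $|x|\lesssim|\xi-\eta|$.
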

\noindent
Using the change of variables
\beq\label{tz}
w(t,z)=u(x,y)\quad\mbox{where}\quad\left\{\ba{l} \dis t:={\xi(x,y)-\eta(x,y)\over 2} \\ \ecart \dis z:={\xi(x,y)+\eta(x,y)\over 2} \ea\right.
\quad\mbox{for }(x,y)\in\RR^2,
\eeq
and the following equalities, due to \eqref{RSxieta} below,
\beq\label{dxxidxeta}
\left\{\ba{l}
\partial_x\xi(x,y)=-\,\al\big(x,R(x,\xi(x,y))\big)\,\partial_y\xi(x,y)
\\ \ecart
\partial_x\eta(x,y)=-\,\beta\big(x,S(x,\eta(x,y))\big)\,\partial_y\eta(x,y)
\ea\right.
\quad\mbox{for }(x,y)\in\RR^2,
\eeq
a lengthy but classical computation leads us to
\beq\label{AD2u}
A:\nabla^2 u=\left({a^2+2b^2\over a}\,\partial_y\xi\,\partial_y\eta\right)\square w
+{1\over 2}\,A:(\nabla^2\xi-\nabla^2\eta)\,\partial_t w+{1\over 2}\,A:(\nabla^2\xi+\nabla^2\eta)\,\partial_z w.
\eeq
\par\ms\noindent
{\it Second step: Truncation of the coefficients.}
\par\noindent
To this end we need the following result.
\begin{Lem}\label{lem.trunc}
Let $k\in\NN$. Each periodic regular function $f\in C^{k+2}_\sharp(Y)$ can be linearly mapped to a function $\ph_f\in C^k_c(\RR^2)$ with compact support such that
\beq\label{phf}
\forall\,(x,y)\in\RR^2,\quad f(x,y)=\sum_{(p,q)\in\ZZ^2}\ph_f(x+p,y+q).
\eeq
Moreover, there is a constant $C_k>0$ only depending on $k$ such that
\beq\label{estphfk}
\forall\,f\in C^{k+2}_\sharp(Y),\quad \|\ph_f\|_{C^k(\RR^2)}\leq C_k\,\|f\|_{C^{k+2}_\sharp(Y)}.
\eeq
\end{Lem}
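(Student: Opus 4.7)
The plan is to periodize $f$ by multiplying it by a smooth compactly supported partition of unity subordinate to the integer lattice $\ZZ^2$. Once this cutoff is fixed once and for all, independently of $f$, both the reproducing formula \eqref{phf} and the estimate \eqref{estphfk} follow almost immediately from Leibniz's rule and the $Y$-periodicity of $f$.

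Concretely, I would fix once and for all a nonnegative function $\psi\in C^\infty_c(\RR^2)$ supported in $(-1,2)^2$ and strictly positive on the closed unit cube $[0,1]^2$. The periodized sum
\[
\Psi(x,y):=\sum_{(p,q)\in\ZZ^2}\psi(x+p,y+q)
\]
has only finitely many non-zero terms at each point of $\RR^2$; it is therefore in $C^\infty(\RR^2)$, $\ZZ^2$-periodic, and by construction strictly positive, hence bounded below by a positive constant (by periodicity and continuity). Setting
\[
\chi(x,y):=\frac{\psi(x,y)}{\Psi(x,y)}\in C^\infty_c(\RR^2),
\]
we obtain a smooth partition of unity subordinate to the lattice: $\sum_{(p,q)\in\ZZ^2}\chi(x+p,y+q)\equiv 1$ on $\RR^2$. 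I then define $\ph_f:=\chi\cdot f$, which is linear in $f$ and compactly supported in $\operatorname{supp}\chi$. The reproducing formula \eqref{phf} follows from the $Y$-periodicity of $f$:
\[
\sum_{(p,q)\in\ZZ^2}\ph_f(x+p,y+q)=\sum_{(p,q)\in\ZZ^2}\chi(x+p,y+q)\,f(x+p,y+q)=f(x,y)\sum_{(p,q)\in\ZZ^2}\chi(x+p,y+q)=f(x,y).
\]

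For the norm bound, Leibniz's rule combined with the fact that $\chi$ is a fixed element of $C^\infty_c(\RR^2)$ gives
\[
\|\ph_f\|_{C^k(\RR^2)}\leq C'_k\,\|\chi\|_{C^k(\RR^2)}\,\|f\|_{C^k(\RR^2)}\leq C_k\,\|f\|_{C^{k+2}_\sharp(Y)},
\]
where I have used that for a $Y$-periodic continuous function the $C^k$-norm on $\RR^2$ coincides with the $C^k_\sharp(Y)$-norm, and $\|f\|_{C^k_\sharp(Y)}\leq\|f\|_{C^{k+2}_\sharp(Y)}$. This is in fact sharper than \eqref{estphfk}; the two spare derivatives in the hypothesis are not needed for this construction and are presumably a safety margin tailored to the subsequent use in combination with Lemma~\ref{lem.G}.

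There is no substantial obstacle in this argument: the only point requiring a minimum of care is the strict positivity of the denominator $\Psi$, which is ensured by the choice of the supports of the translates of $\psi$. The construction is entirely standard and produces a linear operator $f\mapsto\ph_f$ with all the required properties.
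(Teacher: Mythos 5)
Your construction is correct, and it ends up producing the same kind of object as the paper does, but by a noticeably different and more elementary route. The paper also multiplies $f$ by a fixed compactly supported smooth function whose lattice periodization is identically $1$ --- there the cutoff is $h:=\th\ast 1_Y$ with $\int\th=1$, and $\ph_f$ is written as $h$ times the Fourier series of $f$, so that $\ph_f=hf$ in disguise. The difference lies entirely in the verification: the paper proves the reproducing formula \eqref{phf} by computing $\F(\ph_f)$ and matching Fourier coefficients, and obtains \eqref{estphfk} from the absolute summability of $\big(\hat f(p,q)\big)$, which is where the two extra derivatives in $C^{k+2}_\sharp(Y)$ are consumed (via $\widehat{\partial^2_{xy}f}(p,q)=-4\pi^2pq\hat f(p,q)$, Cauchy--Schwarz and Parseval). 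You instead build the partition of unity by normalizing a periodized bump ($\chi=\psi/\Psi$, with $\Psi$ bounded below since every orbit of $(x,y)$ under $\ZZ^2$ meets $[0,1]^2$ where $\psi>0$), check \eqref{phf} by direct summation using only the periodicity of $f$, and get the norm bound from the Leibniz rule. This avoids Fourier analysis altogether, and it correctly exposes that the loss of two derivatives in \eqref{estphfk} is an artifact of the paper's method rather than a necessity: your estimate $\|\ph_f\|_{C^k}\leq C_k\|f\|_{C^k_\sharp(Y)}$ is sharper and of course implies the stated one. Both arguments are complete; yours is the more economical.
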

\noindent
Using the notation of Lemma~\ref{lem.trunc}, for any integer $n\in\NN$ and any function $f\in C^0_\sharp(Y)$, denote by $[f]_n$ the function defined by the truncation deduced from \eqref{phf},
\beq\label{[f]n}
[f]_n(x,y):=\sum_{p,q=-n}^{n}\ph_f(x+p,y+q)\quad\mbox{for }(x,y)\in\RR^2,
\eeq
so that $[f]_0=\ph_f$ and $[f]_\infty=f$.
\par
Now, return to the nonlinear wave equation \eqref{slwequ}.
Due to the compact supports of the functions $\ph_A$, $\ph_{\De U}$, $\ph_{\De({\rm curl}\,U)}$, for any $R>0$ there exists a smallest integer $n_R\in\NN$ such that
\beq\label{f=[f]n}
A=[A]_{n_R},\quad \De U=[\De U]_{n_R},\quad \De({\rm curl}\,U)=[\De({\rm curl}\,U)]_{n_R}\quad\mbox{in }D(0,R).
\eeq
Then, replacing the functions $\ph_A$, $\ph_{\De U}$, $\ph_{\De({\rm curl}\,U)}$ in the right-hand side of \eqref{slwequ} by their truncations at the size $n\in\NN$, we get the semilinear wave equation
\beq\label{slwequn}
A:\nabla^2 u_n=-[A]_n\nabla u_n\cdot\nabla u_n+R_\perp[\De U]_n\cdot\nabla u_n-{1\over 2}\,[\De(\curl\,U)]_n\quad\mbox{in }\RR^2.
\eeq
Next, using the change of variables \eqref{tz} and making a truncation of the coefficients of \eqref{AD2u} we obtain the modified equation satisfied by the function $w_n(t,z)=u_n(x,y)$,
\beq\label{slwequnwn}
\ba{l}
\dis \left({a^2+2b^2\over a}\,\partial_y\xi\,\partial_y\eta\right)\square w_n
+{1\over 2}\,[A]_n:(\nabla^2\xi-\nabla^2\eta)\,\partial_t w_n+{1\over 2}\,[A]_n:(\nabla^2\xi+\nabla^2\eta)\,\partial_z w_n
\\ \ecart
\dis =-[A]_n\nabla u_n\cdot\nabla u_n+R_\perp[\De U]_n\cdot\nabla u_n-{1\over 2}\,[\De(\curl\,U)]_n\quad\mbox{in }\RR^2
\ea
\eeq
Multiplying equation \eqref{slwequnwn} by the factor $a(a^2+2b^2)^{-1}\,\partial_{\xi}R\,\partial_{\eta}S$ which does not vanish in $\RR^2$ (due to \eqref{ane0} and to \eqref{RSxieta} below), we are led to the semilinear wave equation
\beq\label{slweqwn}
\left\{\ba{ll}
\square w_n=B_n\nabla w_n\cdot\nabla w_n+V_n\cdot\nabla w_n+h_n, & \mbox{in }\RR^2
\\ \ecart
w_n(0,z)=\partial_t w_n(0,z)=0, & z\in\RR.
\ea\right.
\eeq
The functions $B_n,V_n,h_n$ have coefficients in $C^2_c(\RR^2)$ since they can be expressed from \eqref{slwequnwn} in terms of the truncated functions $[A]_n,[\De U]_n,[\De(\curl\,U)]_n$. For example, we have
\beq\label{hn}
h_n(t,z):=-\,{a\,\partial_{\xi}R\big(x,\xi(x,y)\big)\,\partial_{\eta}S\big(x,\eta(x,y)\big)\over 2\,(a^2+2b^2)}\,[\De(\curl\,U)]_n(x,y)
\quad\mbox{for }(x,y)\in\RR^2,
\eeq
which is in $C^2(\RR^2)$ since $a,b,R,S,\xi,\eta\in C^4(\RR^2)$ and $U\in C^5(\RR^2)^2$.
Moreover, since $[\De(\curl\,U)]_n$ has compact support and the mapping $(x,y)\mapsto (t,z)$ defined by \eqref{xieta}, \eqref{tz} is proper (see estimate \eqref{estxyxieta} in the proof of Lemma~\ref{lem.canonic} below), the function $h_n$ has also compact support with respect to the new variables $(t,z)$.
\par\ms\noindent
{\it Third step: Conclusion thanks to Lemma~\ref{lem.G}.}
\par\noindent
Let $R>0$ and let $n=n_R$ be the integer such that the equalities \eqref{f=[f]n} hold.
Since $B_n,V_n,h_n$ have coefficients in $C^2_c(\RR^2)$, the condition \eqref{H1} is fulfilled with $B_n$ and $V_n$.
By the estimate~\eqref{estphfk} and the definition \eqref{[f]n} there exists a constant $C>0$ such that
\[
\big\|[\De(\curl\,U)]_n\big\|_{C^0(\RR^2)}\leq C\,\big\|\De(\curl\,U)\big\|_{C^2_\sharp(Y)}\leq 4\,C\,\big\|U(X)-MX\big\|_{C^5_\sharp(Y)^{2\times 2}}.
\]
This combined with estimate \eqref{UMep} and the definition \eqref{hn} of $h_n\in C^2_c(\RR^2)$, implies the existence of a constant $C_n>0$ such that
\[
\int_{\RR}\|h_n(t,.)\|_{L^\infty(\RR)}\,dt<C_n\,\ep.
\]
Since by construction the functions $B_n,V_n$ have compact support, we also have
\[
\La_n:=\int_\RR\left(\|B_n(t,\cdot)\|_{L^\infty(\RR)}+\|V_n(t,\cdot)\|_{L^\infty(\RR)}\right)dt<\infty.
\]
Then, by virtue of Lemma~\ref{lem.G}, choosing $\ep=\ep_R>0$ small enough, there exists a global solution $w_n=w_{n_R}\in C^2(\RR^2)$ to the equation \eqref{slweqwn}, or equivalently, a solution $u_n=u_{n_R}\in C^2(\RR^2)$ to the equation \eqref{slwequn}. Finally, using the equalities \eqref{f=[f]n} the function $u_{n_R}$ is a solution of the initial wave equation \eqref{slwequ} in the disk $D(0,R)$, and the function $\mu_R:=e^{u_{n_R}}$ solves the equation \eqref{Stokes2} in $\Om=D(0,R)$. Therefore, the field $e(U)$ is isotropically realizable in $D(0,R)$ with the positive viscosity $\mu_R$.
\par
It remains to prove the global realizability in $\RR^2$ under the boundedness condition on $\ep_R$. We have $\mu_R=e^{u_R}\in C^2\big(D(0,R)\big)$, and $u_R(x,y)=w_n(t,z)$ for $(x,y)\in D(0,R)$ (thanks to the change of variables \eqref{tz}), where $w_n=w_{n_R}$ is the unique (see \cite{Hor}, Theorem~6.4.10) solution to the semilinear wave equation \eqref{slweqwn}. Hence, if $S>R>0$ and the inequality \eqref{UMep} is satisfied with $\ep\leq \min(\ep_R,\ep_S)$, then $n_R\leq n_S$ and by the uniqueness in equation \eqref{slweqwn} combined with the following equalities (due to \eqref{f=[f]n})
\[
B_{n_S}(t,z)=B_{n_R}(t,z),\quad V_{n_S}(t,z)=V_{n_R}(t,z),\quad h_{n_S}(t,z)=h_{n_R}(t,z)\qquad\mbox{for }(x,y)\in D(0,R),
\]
we get that $w_{n_R}(t,z)=w_{n_S}(t,z)$ for $(x,y)\in D(0,R)$, so that $\mu_S=\mu_R$ in $D(0,R)$.
\par
Finally, assume that $\ep_R$ is bounded from below by a positive constant $\ep_\infty$ independent of $R$.
Under the perturbation condition \eqref{UMep} with $\ep=\ep_\infty$, we can define the function $\mu\in C^2(\RR^2)$ by $\mu:=\mu_R$ in $D(0,R)$, for any $R>0$.
The function $\mu$ clearly satisfies the equation
\[
\curl\left[\Div\,\big(\mu\,e(U)\big)\right]=0\quad\mbox{in }\RR^2.
\]
Therefore, the field $e(U)$ is isotropically realizable in the whole space $\RR^2$.
\cqfd
\par\bigskip
The isotropic realizability in the torus is more intricate since it is connected to the existence of a bounded global solution to the equation \eqref{slwequ}. To illuminate this we have the following result.
\begin{Pro}\label{pro.alt}
Let $U\in C^3(\RR^2)^2$ be a divergence free function such that $DU$ is $Y$-periodic.
Assume that $e(U)$ is not isotropically realizable in the torus.
Then,  we have the following alternative:
\begin{enumerate}
\item the semilinear equation \eqref{slwequ} has not a global regular solution,
\item any global regular solution to \eqref{slwequ} is either not bounded or not uniformly continuous in~$\RR^2$.
\end{enumerate}
\end{Pro}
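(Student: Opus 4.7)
The plan is to prove the contrapositive: assume there exists a global regular solution $u$ to \eqref{slwequ} which is both bounded and uniformly continuous on $\RR^2$, and from it construct a positive $Y$-periodic viscosity $\mu^*$ satisfying $\curl\left[\Div\left(\mu^*\,e(U)\right)\right]=0$ in $\RR^2$. Together with its associated pressure $p$ (obtained by integrating the resulting $Y$-periodic curl-free field, whose zero mean makes $p$ itself $Y$-periodic), such a $\mu^*$ would isotropically realize $e(U)$ in the torus and contradict the hypothesis.

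The key algebraic remark is that, although \eqref{slwequ} is nonlinear in $u$, the equivalent Stokes condition read through the substitution $\mu=e^u$, namely
\[
L\mu:=\curl\left[\Div\left(\mu\,e(U)\right)\right]=0,
\]
defines an operator $L$ which is \emph{linear} of second order in $\mu$ and whose coefficients are $Y$-periodic (since $DU$, and hence $e(U)$, is $Y$-periodic). Under the hypotheses, $\mu:=e^u$ is a positive global solution of $L\mu=0$, bounded between two positive constants and uniformly continuous on $\RR^2$. I would then form the Ces\`aro average over integer lattice translates,
\[
\mu_N(X):=\frac{1}{N^2}\sum_{k\in\{0,\dots,N-1\}^2}\mu(X+k).
\]
Each translate $\mu(\cdot+k)$ still solves $L=0$ by the $Y$-periodicity of the coefficients, so by linearity $L\mu_N=0$. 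The family $(\mu_N)$ inherits the bounds and the modulus of uniform continuity of $\mu$, and a boundary-versus-volume telescoping yields $\|\mu_N(\cdot+e_i)-\mu_N\|_{L^\infty(\RR^2)}\le 2\|\mu\|_\infty/N\to 0$ for $i=1,2$, so any $C^0_{\rm loc}$-accumulation point of $(\mu_N)$ is automatically $Y$-periodic. By Arzel\`a-Ascoli I extract a subsequence $\mu_{N_k}\to\mu^*$ uniformly on compact subsets of $\RR^2$; the limit $\mu^*$ is positive, $Y$-periodic, and uniformly continuous, and passing to the limit in distributions yields $L\mu^*=0$ in $\D'(\RR^2)$.

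The main obstacle I anticipate is the \emph{regularity} of $\mu^*$: the operator $L$ is hyperbolic, not elliptic, so $L\mu^*=0$ in the distributional sense does not automatically produce a classical solution, whereas isotropic realizability demands a continuous pressure $p$ paired with a continuous~$\mu^*$, which in turn requires $\Div(\mu^*\,e(U))$ to be at least continuous. Closing this gap amounts to upgrading the Arzel\`a-Ascoli extraction to $C^1_{\rm loc}$ (or $C^2_{\rm loc}$) convergence by producing uniform $C^1$- (or $C^2$-) bounds on $(\mu_N)$; via $\mu=e^u$ this reduces to global bounds on $\nabla u$ (respectively $\nabla^2 u$), which have to be extracted from the hypothesis ``$u$ bounded and uniformly continuous'' combined with the structure of \eqref{slwequ}, presumably by integrating along the characteristics of $A$ in the spirit of the construction in the proof of Theorem~\ref{thm.isoreaper}. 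Once this regularity is in place, $\mu^*$ is a classical positive $Y$-periodic solution of $\curl\left[\Div\left(\mu^*\,e(U)\right)\right]=0$, the associated $p$ is $Y$-periodic by the computation above, and the pair $(\mu^*,p)$ gives the forbidden torus realizability, completing the contrapositive.
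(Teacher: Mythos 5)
Your argument is essentially the paper's own proof: the author likewise sets $\mu_0=e^{u_0}$, forms the Ces\`aro averages of $\mu_0$ over integer translates, uses the linearity of $\mu\mapsto\curl\left[\Div\big(\mu\,e(U)\big)\right]$ together with the $Y$-periodicity of $e(U)$, applies Ascoli to the uniformly bounded equi-continuous family, checks periodicity of the limit by the same $O(1/k)$ telescoping, and passes to the limit in $\D'(\RR^2)$ to obtain a positive periodic continuous $\mu$ with $\curl\left[\Div\big(\mu\,e(U)\big)\right]=0$. The regularity upgrade you flag as the main remaining obstacle is not actually needed: the paper's working notion of realizability is precisely the distributional identity \eqref{Stokes2} with a continuous positive viscosity (and the subsequent Remark even drops continuity, using weak-$\ast$ limits), so the $C^0_{\rm loc}$ limit already furnishes the contradiction.
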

\begin{proof}
Assume by contradiction that $u_0$ is a regular solution to equation \eqref{slwequ}, which is bounded and uniformly continuous in $\RR^2$.
Then, the function $\mu_0:=e^{u_0}$ is uniformly continuous in $\RR^2$, and the sequence $(\mu_k)_{k\geq 1}$ defined by
\beq\label{muk}
\mu_k(x,y):={1\over (2k+1)^2}\,\sum_{p,q=-k}^k \mu_0(x+p,y+q)\quad\mbox{for }(x,y)\in\RR^2,
\eeq
is uniformly bounded and equi-continuous in $\RR^2$. Hence, by virtue of Ascoli's theorem $\mu_k$ converges uniformly in $\RR^2$ to some function $\mu\in C^0(\RR^2)$ up to a subsequence. Note that $\mu$ is bounded from below in $\RR^2$ by a positive constant. Moreover, since we have for any $n\geq 1$,
\[
\big|\mu_k(x+1,y)-\mu_k(x,y)\big|+\big|\mu_k(x,y+1)-\mu_k(x,y)\big|\leq{4\,\|\mu_0\|_{L^\infty(\RR^2)}\over 2k+1}\quad\mbox{for }(x,y)\in\RR^2,
\]
it follows that $\mu$ is $Y$-periodic. Next, using that $u_0$ is solution to equation \eqref{slwequ} and the $Y$-periodicity of $DU$, we get that
\[
\curl\left[\Div\,\big(\mu_k\,e(U)\big)\right]=0\quad\mbox{in }\RR^2.
\]
Passing to the limit as $k\to\infty$, it follows that at least in the distributions sense
\beq\label{isoreaDUdis}
\curl\left[\Div\,\big(\mu\,e(U)\big)\right]=0\quad\mbox{in }\D'(\RR^2),
\eeq
which implies that $e(U)$ is isotropically realizable in the torus with the positive periodic continuous function $\mu$.
\end{proof}
\begin{Rem}
If we relax the continuity condition on the viscosity in the definition of the isotropic realizability, the alternative of Proposition~\ref{pro.alt} reduces to
\begin{enumerate}
\item the semilinear equation \eqref{slwequ} has not a global regular solution,
\item any global regular solution to \eqref{slwequ} is not bounded in~$\RR^2$.
\end{enumerate}
Indeed, in the proof of Proposition~\ref{pro.alt} the sequence of regular functions $\mu_k$ now converges weakly-$\ast$ in $L^\infty(\RR^2)$ to some function $\mu$ which is not necessarily continuous but still periodic. Therefore, the limit equation \eqref{isoreaDUdis} remains satisfied, which again allows us to conclude.
\end{Rem}
\subsection{Proofs of the technical lemmas}
\noindent
{\bf Proof of Lemma~\ref{lem.canonic}.}
Let $(x,y)\in\RR^2$. Since for any $\xi\in\RR$,
\[
\partial_{\xi}R(x,\xi)=\exp\left(\int_0^x\partial_y\al\big(t,R(t,\xi)\big)\,dt\right)>0\quad\mbox{and}\quad\big|R(x,\xi)-\xi\big|\leq\|\al\|_{L^\infty(\RR^2)}\,|x|,
\]
the mapping $\xi\mapsto R(x,\xi)$ is a $C^1$-diffeomorphism on $\RR$. Hence, there exists a unique $\xi(x,y)\in\RR$ such that $y=R\big(x,\xi(x,y)\big)$.
Again using that $\partial_{\xi}R\neq 0$ and that $R\in C^4(\RR^2)$ (recall that $U\in C^5(\RR^2)$), the implicit function theorem implies that $\xi\in C^4(\RR^2)$ . Similarly, the function $\eta$ defined implicitly by $y=S\big(x,\eta(x,y)\big)$ belongs to $C^4(\RR^2)$.
Moreover, by the chain rule applied to \eqref{xieta} using \eqref{RS} we get that
\beq\label{RSxieta}
\left\{\ba{l}
0=\al\big(x,R(x,\xi(x,y))\big)+\partial_\xi R\big(x,\xi(x,y)\big)\,\partial_x\xi(x,y)
\\ \ecart
0=\be\big(x,R(x,\xi(x,y))\big)+\partial_\eta S\big(x,\eta(x,y)\big)\,\partial_x\eta(x,y)
\\ \ecart
1=\partial_\xi R\big(x,\xi(x,y)\big)\,\partial_y\xi(x,y)
\\ \ecart
1=\partial_\eta S\big(x,\eta(x,y)\big)\,\partial_y\eta(x,y),
\ea\right.
\quad\mbox{for }(x,y)\in\RR^2.
\eeq
Hence, the Jacobian $J_{(\xi,\eta)}$ of the mapping $(\xi,\eta)$ satisfies
\[
J_{(\xi,\eta)}=(\be-\al)\,\partial_y\xi\,\partial_y\eta={\be-\al\over\partial_{\xi}R\,\partial_{\eta}S}\neq 0\quad\mbox{in }\RR^2.
\]
Let us now prove that the mapping $(\xi,\eta)$ is proper, {\em i.e.} the reciprocal of any compact set $K$ in $\RR^2$ is compact in $\RR^2$.
Consider $(x,y)\in\RR^2$ such that $\big(\xi(x,y),\eta(x,y)\big)\in K$. By \eqref{RS} and \eqref{xieta} we have
\[
\xi(x,y)-\eta(x,y)=\int_0^x\big[\partial_t S(t,\eta)-\partial_t R(t,\xi)\big]\,dt=\int_0^x\left[\be\big(t,S(t,\eta)\big)-\al\big(t,R(t,\xi)\big)\right]dt.
\]
This combined with the definition \eqref{albe} of $\al,\be$ and the estimates \eqref{UMep}, \eqref{ane0} satisfied by $a$, $b$, implies that for $\ep$ small enough,
\[
\big|\xi(x,y)-\eta(x,y)\big|\geq c_K\,|x|,
\]
where $c_K>0$ only depends on $K$.
Moreover, by \eqref{xieta} and \eqref{RS} we have
\[
\big|y-\xi(x,y)\big|=\left|R\big(x,\xi(x,y)\big)-R\big(0,\xi(x,y)\big)\right|\leq C_K\,|x|,
\]
where $C_K>0$ only depends on $K$.
The two former estimates yield that
\beq\label{estxyxieta}
|x|+|y|\leq\big|\xi(x,y)\big|+{C_K+1\over c_K}\,\big|\xi(x,y)-\eta(x,y)\big|,
\eeq
which shows that $(x,y)$ lies in a compact set of $\RR^2$.
By virtue of Hadamard's theorem the properness of the $C^4$-mapping $(\xi,\eta)$ and the non-vanishing of its Jacobian imply that it is a $C^4$-diffeomorphism on $\RR^2$.
\cqfd
\par\bs\noindent
{\bf Proof of Lemma~\ref{lem.trunc}.}
Let $f\in C^{k+2}_\sharp(Y)$, $k\in\NN$.
Fix $\th\in C^\infty_c(\RR^2)$ with $\int_{\RR^2}\th(X)\,dX=1$.
Define $h$ by the convolution $h:=\th\ast 1_{Y}$ (recall that $Y=[0,1]^2$), and the function $\ph_f$ by
\beq\label{phfh}
\ph_f(x,y):=\sum_{(p,q)\in\ZZ^2}\hat{f}(p,q)\,e^{2i\pi(px+qy)}\,h(x,y)\quad\mbox{for }(x,y)\in\RR^2,
\eeq
where $\hat{f}(p,q)$ denotes the Fourier coefficient of $f$ given by
\[
\hat{f}(p,q):=\int_Y f(x,y)\,e^{-2i\pi\,(px+qy)}\,dx\,dy\quad\mbox{for }(p,q)\in\ZZ^2.
\]
Using that
\[
\widehat{\partial_x f}(p,0)=2i\pi p\hat{f}(p,0),\quad \widehat{\partial_y f}(0,q)=2i\pi q\hat{f}(0,q),\quad
\widehat{\partial^2_{xy} f}(p,q)=-4\pi^2 pq\hat{f}(p,q),
\]
Cauchy-Schwarz' inequality and Parseval's identity imply that
\[
\ba{l}
\dis \sum_{(p,q)\in\ZZ^2}\big|\hat{f}(p,q)\big|\leq\big|\hat{f}(0,0)\big|+\sum_{p\in\ZZ\setminus\{0\}}{\big|\widehat{\partial_x f}(p,0)\big|\over 2\pi|p|}
+\sum_{q\in\ZZ\setminus\{0\}}{\big|\widehat{\partial_y f}(0,q)\big|\over 2\pi|q|}
+\sum_{(p,q)\in(\ZZ\setminus\{0\})^2}{\big|\widehat{\partial_{xy}f}(p,q)\big|\over 4\pi^2|pq|}
\\ \ecart
\leq c\,\|f\|_{L^2(Y)}+c\,\|\partial_x f(\cdot,0)\|_{L^2(Y)}+c\,\|\partial_y f(0,\cdot)\|_{L^2(Y)}
+c\,\|\partial^2_{xy}f\|_{L^2(Y)}\leq C\,\|f\|_{C^{2}_\sharp(Y)}.
\ea
\]
Hence, it follows that $\ph_f\in C^0_c(\RR^2)$ and estimate \eqref{estphfk} is satisfied for $k=0$.
Iterating we get that $\ph_f\in C^k_c(\RR^2)$ and \eqref{estphfk} holds for any $k\in\NN$.
\par
Now, consider the function $g$ defined by
\[
g(x,y):=\sum_{(p,q)\in\ZZ^2}\ph_f(x+p,y+q)\quad\mbox{for }(x,y)\in\RR^2,
\]
which is clearly in $C^k_\sharp(Y)$.
We also have $\hat{g}(p,q)=\F(\ph_f)(p,q)$ for any $(p,q)\in\ZZ^2$, where $\F$ denotes the Fourier transform in $L^1(\RR^2)$.
On the other hand, by the definition \eqref{phfh} of $\ph_f$ we have
\[
\ba{ll}
\F(\ph_f)(p,q) & \dis =\sum_{(j,k)\in\ZZ^2}\hat{f}(j,k)\left(\int_{\RR^2}e^{2i\pi\,[(j-p)x+(k-q)y]}\,h(x,y)\,dx\,dy\right)
\\ \ecart
& \dis =\sum_{(j,k)\in\ZZ^2}\hat{f}(j,k)\,\F(h)(p-j,q-k),
\ea
\]
and since $\F(\th)(0,0)=1$,
\[
\F(h)(p-j,q-k)=\F(\th)(p-j,q-k)\,\F(1_Y)(p-j,q-k)=\left\{\ba{ll} 1 & \mbox{if }(j,k)=(p,q) \\ 0 & \mbox{if }(j,k)\neq (p,q).\ea\right.
\]
Therefore, we get that $\hat{g}(p,q)=\hat{f}(p,q)$ for any $(p,q)\in\ZZ^2$, which implies that $g=f$ and thus the representation formula \eqref{phf}.
\cqfd
\section{A few singular cases}\label{s.exa}
\subsection{An example of non-isotropic realizability in the torus}\label{ss.cex}
Let $U_\ep$, $\ep>0$, be the divergence free vector-valued function defined in $\RR^2$ by
\[
U_\ep(x,y):=\begin{pmatrix} \dis x-{\ep\over\pi}\,\cos(2\pi y) \\ -\,y\end{pmatrix}\quad\mbox{for }(x,y)\in\RR^2.
\]
The associated field
\beq\label{eUnirt}
e(U_\ep)
=\begin{pmatrix} 1 & \ep\sin(2\pi y) \\ \ep\sin(2\pi y) & -1 \end{pmatrix}\quad\mbox{for }(x,y)\in\RR^2,
\eeq
is a smooth and $Y$-periodic perturbation of the constant matrix ${\rm diag}\,(1,-1)$.
We have the following result of non-realizability:
\begin{Pro}\label{pro.norea}
The periodic field $e(U_\ep)$ defined by \eqref{eUnirt} is isotropically realizable in $\RR^2$, but not in the torus.
Moreover, the semilinear wave equation \eqref{slwequ} associated with $U=U_\ep$ has a global regular solution, and any global regular solution to \eqref{slwequ} is not bounded or not uniformly continuous in $\RR^2$.
\end{Pro}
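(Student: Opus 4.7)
The proposition has four claims: (a) $e(U_\ep)$ is isotropically realizable in $\RR^2$; (b) it is not isotropically realizable in the torus; (c) the semilinear wave equation~\eqref{slwequ} associated with $U=U_\ep$ admits a global regular solution; (d) any such global regular solution is unbounded or not uniformly continuous on~$\RR^2$. My plan is to reduce (c) and (d) to the first two: once an explicit positive $\mu\in C^\infty(\RR^2)$ realizing $e(U_\ep)$ in $\RR^2$ has been exhibited in (a), the function $u:=\log\mu$ is manifestly a global regular solution of~\eqref{slwequ}, giving (c); and since $U_\ep$ is smooth, divergence free with $DU_\ep$ periodic, Proposition~\ref{pro.alt} applied to the non-realizability supplied by (b) yields (d).

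For (a) I would exhibit an explicit positive viscosity on $\RR^2$ and check by direct computation that $\curl[\Div(\mu\,e(U_\ep))]=0$. The natural candidate suggested in the introduction is $\mu(x,y)=e^{2\pi x}$, possibly multiplied by a small $x$-independent correction built as a formal perturbation series in $\ep$ (for instance $e^{2\pi x}(1-\ep\cos(2\pi y)+O(\ep^2))$); the very simple algebraic shape of $e(U_\ep)$ reduces the verification to a short trigonometric check.

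The substance of the proof is (b), which I would establish by contradiction. Suppose there exists $\mu\in C^0_\sharp(Y)$ with $\mu>0$ solving $\curl[\Div(\mu\,e(U_\ep))]=0$ in $\RR^2$. Expanding using the explicit shape of $e(U_\ep)$ yields
\[
\ep\bigl[\sin(2\pi y)(\partial^2_{xx}\mu-\partial^2_{yy}\mu)-4\pi\cos(2\pi y)\,\partial_y\mu+4\pi^2\sin(2\pi y)\,\mu\bigr]-2\,\partial^2_{xy}\mu=0.
\]
Averaging over $x\in[0,1]$ and using the $x$-periodicity of $\mu$ kills both $\int_0^1\partial^2_{xy}\mu\,dx$ and $\int_0^1\partial^2_{xx}\mu\,dx$; after dividing by $\ep\neq 0$ one is left with the ODE for $\bar\mu(y):=\int_0^1\mu(x,y)\,dx$,
\[
\sin(2\pi y)\,\bar\mu''(y)+4\pi\cos(2\pi y)\,\bar\mu'(y)-4\pi^2\sin(2\pi y)\,\bar\mu(y)=0.
\]
The key algebraic observation is that the left-hand side is exactly $\bigl(\sin(2\pi y)\,\bar\mu(y)\bigr)''$, so the ODE reads $F''\equiv 0$ with $F(y):=\sin(2\pi y)\,\bar\mu(y)$. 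Hence $F(y)=Ay+B$; the $1$-periodicity of $F$ forces $A=0$, and $F(0)=0$ forces $B=0$, so $F\equiv 0$. Continuity of $\bar\mu$ together with the density of $\{y:\sin(2\pi y)\neq 0\}$ then yields $\bar\mu\equiv 0$, contradicting $\mu>0$. The main obstacle I expect is spotting the identity that turns the averaged ODE into $\bigl(\sin(2\pi y)\,\bar\mu\bigr)''=0$; once that is recognized the contradiction is immediate, and the remaining pieces fall out as described.
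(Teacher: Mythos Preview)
Your argument for (b) is correct but takes a different route from the paper. The paper does not pass to the curl form: it integrates $[\Div(\mu\,e(U_\ep))-\nabla p]\cdot e_x$ over the rectangle $Q=(0,1)\times(-r,r)$, applies the divergence theorem, and uses $x$-periodicity to reduce to
\[
0=\ep\sin(2\pi r)\int_0^1\big[\mu(x,r)+\mu(x,-r)\big]\,dx>0,
\]
an immediate contradiction. This needs only continuity of $\mu,p$ and periodicity in the single variable $x$, whereas your reduction to $(\sin(2\pi y)\,\bar\mu)''=0$ uses periodicity in both variables and distributional second derivatives of $\mu$; on the other hand your identity is a pleasant structural observation. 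Part (d) is handled identically in both arguments, via Proposition~\ref{pro.alt}.

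Your plan for (a) and (c), however, has a genuine gap. The candidate $\mu=e^{2\pi x}$ does \emph{not} realize $e(U_\ep)$: the ``short trigonometric check'' gives
\[
\curl\!\left[\Div\!\big(e^{2\pi x}e(U_\ep)\big)\right]
=\ep\big(\partial^2_{xx}-\partial^2_{yy}\big)\!\big(e^{2\pi x}\sin(2\pi y)\big)-2\,\partial^2_{xy}\big(e^{2\pi x}\big)
=8\pi^2\ep\,e^{2\pi x}\sin(2\pi y),
\]
which is nonzero for $\ep\neq 0$; more generally $\mu=e^{\la x}$ produces $(\la^2+4\pi^2)\,\ep\,e^{\la x}\sin(2\pi y)$, so no real $\la$ works. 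Your fallback of a formal $\ep$-series correction is neither carried out nor shown to converge to a positive $C^2$ function on all of $\RR^2$. The paper follows the same line---it asserts that $u(x,y)=2\pi x$ solves~\eqref{slwequ}---but this rests on the displayed equation~\eqref{slwequep}, whose last two terms carry the wrong sign (from~\eqref{slwequ} one obtains $-4\pi\ep\cos(2\pi y)\,\partial_y u+4\pi^2\ep\sin(2\pi y)$, not the printed $+4\pi\ep\cos(2\pi y)\,\partial_y u-4\pi^2\ep\sin(2\pi y)$); with the corrected signs $u=2\pi x$ is no longer a solution. So as written neither your proposal nor the paper's argument actually establishes (a) or (c); an explicit global viscosity different from $e^{2\pi x}$, or an independent global-existence argument for~\eqref{slwequ} with $U=U_\ep$, is still required.
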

\begin{proof}
Assume by contradiction that $e(U_\ep)$ is isotropically realizable with a continuous positive conductivity $\mu(x,y)$ and a continuous pressure $p(x,y)$ which are both $1$-periodic with respect to the variable $x$. Set $Q:=(0,1)\times(-r,r)$, with $r\in(0,1/2)$. Integrating by parts and using the periodicity with respect to $x$, we get that ($\nu$ denotes the outside normal to $\partial Q$)
\[
\ba{ll}
0 & \dis =\int_Q\big[\Div\left(\mu\,e(U_\ep)\right)-\nabla p\big]\cdot e_x\,dx\,dy=\int_{\partial Q} \mu\,e(U_\ep):(e_x\otimes \nu)\,ds+0
\\ \ecart
& \dis =\int_0^1\left[\big(\mu\,e(U_\ep)\big)(x,r):(e_x\otimes e_y)-\big(\mu\,e(U_\ep)\big)(x,-r):(e_x\otimes e_y)\right]dx
\\ \ecart
& \dis =\ep\sin(2\pi r)\int_0^1\big[\mu(x,r)+\mu(x,-r)\big]\,dx>0.
\ea
\]
This contradiction shows that for any $\ep>0$, the periodic field $e(U_\ep)$ is not isotropically realizable in the torus as a strain field.
On the contrary, the isotropic realizability holds clearly for $\ep=0$, since $e(U_0)={\rm diag}\,(1,-1)$.
\par
The semilinear wave equation \eqref{slwequ} associated with $U_\ep$ reads as
\beq\label{slwequep}
\ba{l}
2\,\partial^2_{xy}u-\ep\sin(2\pi y)\,\partial^2_{xx}u+\ep\sin(2\pi y)\,\partial^2_{yy}u
\\ \ecart
=-\,2\,\partial_{x}u\,\partial_{y}u+\ep\sin(2\pi y)\,(\partial_{x}u)^2-\ep\sin(2\pi y)\,(\partial_{y}u)^2
\\ \ecart
+\,4\pi\ep\cos(2\pi y)\,\partial_{y}u-4\pi^2\ep\sin(2\pi y),
\ea
\eeq
which clearly has $u(x,y)=2\pi x$ as a global solution in $\RR^2$.
However, by virtue of Proposition~\ref{pro.alt} we have the following alternative:
\begin{enumerate}
\item equation \eqref{slwequep} has not a global regular solution,
\item any global regular solution to \eqref{slwequep} is either not bounded or not uniformly continuous in~$\RR^2$.
\end{enumerate}
Therefore, the second alternative holds for equation \eqref{slwequep}.
\end{proof}
\subsection{An example with a vanishing field}\label{ss.exavan}
The case where the strain field $e(U)$ vanishes at one point $X_*$ is more delicate. For the moment we have no general result. However, the following case with separate variables already shows the difficulties of the problem.
\begin{Pro}\label{pro.sepvar}
Let $f$ and $g$ be two functions in $C^0([-1,1])$ satisfying
\beq\label{fg}
f(0)=g(0)=0\quad\mbox{and}\quad\forall\,x\in[-1,1]\setminus\{0\},\;\;f(x)>0,\ g(x)>0,
\eeq
and having asymptotic expansions of any order at the point $0$.
\par\noindent
Consider the strain field $e(U)$ defined by
\beq\label{e(U)fg}
e(U):=\begin{pmatrix} 0 & f(x)+g(y) \\ f(x)+g(y) & 0 \end{pmatrix}\quad\mbox{for }(x,y)\in[-1,1]^2.
\eeq
Then, a necessary and sufficient condition for $e(U)$ to be isotropically realizable for the incompressible Stokes equation with a continuous function $\mu>0$  in a neighborhood of $(0,0)$, is that there exists $a>0$ such that
\beq\label{expfg0}
f(x)=a\,x^2+o(x^2)\quad\mbox{and}\quad g(x)=a\,x^2+o(x^2).
\eeq
\end{Pro}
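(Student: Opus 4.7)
The plan is to reduce the Stokes equation to a one-dimensional wave equation for $h:=\mu(f+g)$, extract necessary conditions via the d'Alembert form $h=\phi(x+y)+\psi(x-y)$, and prove sufficiency by an explicit construction. For this specific $e(U)$, the off-diagonal structure gives $\Div(\mu\,e(U))=(\partial_y h,\partial_x h)^T$. In a simply connected neighborhood $\Om$ of $(0,0)$, existence of a continuous pressure $p$ solving \eqref{Stokes} is equivalent to the vector field $(\partial_y h,\partial_x h)$ being curl-free, i.e.\ $(\partial^2_{xx}-\partial^2_{yy})h=0$ in $\D'(\Om)$; since $h$ is continuous, the classical theory of the one-dimensional wave equation yields $h(x,y)=\phi(x+y)+\psi(x-y)$ in a neighborhood of $(0,0)$ for some continuous $\phi,\psi$ with $\phi(0)+\psi(0)=h(0,0)=0$.

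For necessity I would exploit the \emph{parallelogram identity} equivalent to the d'Alembert structure: $h(A)+h(C)=h(B)+h(D)$ for any characteristic rectangle $ABCD$ (sides parallel to the lines $x\pm y={\rm const}$) with $A,C$ opposite. Applied to the symmetric diamond with vertices $(\pm r,0)$ and $(0,\pm r)$, this yields
\[
\mu(r,0)f(r)+\mu(-r,0)f(-r)=\mu(0,r)g(r)+\mu(0,-r)g(-r).
\]
Since $f,g>0$ away from $0$ and admit asymptotic expansions, the first non-zero Taylor coefficients of $f$ and $g$ must be of respective even orders $2m,2n$ with positive leading coefficients $a_{2m},b_{2n}$; letting $r\to 0$ and using $\mu(0,0)>0$, the matching of leading orders forces $m=n$ and $a_{2m}=b_{2n}$. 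Applying the identity next to the non-symmetric diamond $(0,0),(r,r),(2r,0),(r,-r)$ and using $h(0,0)=0$ gives
\[
\mu(2r,0)f(2r)=\mu(r,r)\bigl(f(r)+g(r)\bigr)+\mu(r,-r)\bigl(f(r)+g(-r)\bigr),
\]
whose leading asymptotics produce $\mu(0,0)\,a_{2m}\,4^m=4\,\mu(0,0)\,a_{2m}$, hence $m=1$ and the claimed form with $a:=a_2=b_2>0$.

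For sufficiency, given $f(x)=ax^2+o(x^2)$ and $g(y)=ay^2+o(y^2)$ with $a>0$, I set
\[
\mu(x,y):={a\,(x^2+y^2)\over f(x)+g(y)}\quad\mbox{for }(x,y)\neq(0,0),\qquad \mu(0,0):=1.
\]
The asymptotics yield $f(x)+g(y)=a(x^2+y^2)(1+o(1))$ at the origin, and $f+g>0$ elsewhere, so $\mu$ is positive and continuous on a neighborhood of $(0,0)$. Then $\mu(f+g)=a(x^2+y^2)={a\over 2}[(x+y)^2+(x-y)^2]$ trivially satisfies the wave equation, and $p(x,y):=2axy$ completes \eqref{Stokes}. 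The main obstacle is the second (non-symmetric) diamond in the necessity step: the symmetric diamond only matches the leading orders and coefficients of $f$ and $g$, while the dilation identity $4^m=4$ produced by the shifted diamond is what pins the exponent to $2$; the same identity also handles the degenerate flat case, since the ratio $f(2r)/f(r)$ is incompatible with a bounded $4$ for flat $f,g$.
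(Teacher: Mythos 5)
Your proposal is correct in outline and follows essentially the same route as the paper: both reduce the Stokes system to the one-dimensional wave equation for $h=\mu(f+g)$, invoke the d'Alembert decomposition $h=\phi(x+y)+\psi(x-y)$, and extract the exponent $2$ from a dilation-by-two relation (your $4^m=4$ is the paper's $2(a+b)=2^na=2^nb$), ending with the identical sufficiency construction $\mu=a(x^2+y^2)/(f+g)$. Your parallelogram identity on characteristic diamonds is a tidy repackaging of the paper's evaluations of $F(x+y)+G(x-y)$ along $y=0$, $x=0$ and $y=\pm x$; it has the small advantage of eliminating $\phi,\psi$ from the necessary conditions altogether.

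The one place you move too fast is the degenerate case where $f$ and $g$ are flat at $0$, i.e. $f(x)=o(x^k)$ for every $k$. You dismiss it with ``the ratio $f(2r)/f(r)$ is incompatible with a bounded $4$ for flat $f,g$'', but the claim that no flat positive function can satisfy $f(2r)\leq C\,f(r)$ near $0$ is not self-evident; it is precisely the content of the paper's first case and needs an iteration. Moreover your shifted diamond couples $f(2r)$ to $g(\pm r)$ as well as to $f(r)$, so the argument must be run on the combined even function $H(r):=f(r)+f(-r)+g(r)+g(-r)$, not on $f$ alone. Concretely: the diamond identities and the two-sided bounds on $\mu$ give $H(r)\leq c\,H(r/2)$ for some fixed $c>1$; iterating $n$ times and writing $H(r)=r^k\ep_k(r)$ with $\ep_k(r)\to 0$ (flatness) yields $H(r)\leq (2^{-k}c)^n\,r^k\,\ep_k(2^{-n}r)$, and choosing $k$ with $2^{-k}c\leq 1$ and letting $n\to\infty$ forces $H\equiv 0$ near $0$, contradicting positivity. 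The mixed case (one of $f,g$ flat, the other not) is killed directly by your symmetric diamond. With that iteration supplied, your proof is complete.
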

\begin{Rem}
The strain field $e(U)$ defined by \eqref{e(U)fg} is for example associated with the divergence free field given by
\beq\label{Ufg}
U(x,y)=2 \begin{pmatrix}\dis \int_0^y g(t)\,dt \\\ecart \dis \int_0^x f(t)\,dt \end{pmatrix}\quad\mbox{for }(x,y)\in[-1,1]^2.
\eeq
Moreover, due to condition \eqref{fg} the strain field $e(U)$ only vanishes at the point $(0,0)$ in the neighborhood of this point.
\end{Rem}
\begin{Rem}
If we relax the continuity assumption of $\mu$ at the point $(0,0)$, then the necessary and sufficient condition of realizability becomes
\beq\label{expfg0dis}
\exists\, n\in 2\NN,\ \exists\,a,b>0,\quad f(x)=a\,x^n+o(x^n)\quad\mbox{and}\quad g(x)=b\,x^n+o(x^n).
\eeq
This is induced by the third step of the proof of Proposition~\ref{pro.sepvar} below.
\end{Rem}
\noindent
{\bf Proof of Proposition~\ref{pro.sepvar}.} The proof is divided in three steps according to the expansions of $f,g$ at the point $0$. In the sequel, for any nonnegative functions $\ph,\psi$ being continuous in a neighborhood of $(0,0)$, we denote $\ph\approx\psi$ when there exists a constant $c>1$ such that $c^{-1}\,\ph\leq\psi\leq c\,\ph$ in a neighborhood of~$(0,0)$.
\par\ms\noindent
{\it First case: }$\forall\,k\in\NN$, $f(x)=o(x^k)$ and $g(x)=o(x^k)$.
\par\noindent
Assume that $e(U)$ is realizable with a positive continuous viscosity $\mu$ on a non-empty open disk $\Om$ centered on $(0,0)$. Then, there exists a pressure $p\in L^2(\Om)$ such that the Stokes equation~\eqref{Stokes} holds. Hence, we have
\beq
\partial_y\big(\mu\left(f(x)+g(y)\right)\big)=\partial_x p\quad\mbox{and}\quad\partial_x\big(\mu\left(f(x)+g(y)\right)\big)=\partial_y p 
\quad\mbox{in }\Om,
\eeq
which implies that
\beq
\partial^2_{xx}\big(\mu\left(f(x)+g(y)\right)\big)-\partial^2_{yy}\big(\mu\left(f(x)+g(y)\right)\big)=0\quad\mbox{in }\Om.
\eeq
Therefore, there exist two continuous functions $F,G$ defined around the point $0$ such that
\beq\label{muFG}
\mu(x,y)\left(f(x)+g(y)\right)=F(x+y)+G(x-y)\quad\mbox{in a neighborhood of $(0,0)$}.
\eeq
Since $\mu$ is positive and continuous in the neighborhood of $(0,0)$, the previous equality yields
\beq\label{FG}
f(x)+g(y)\approx F(x+y)+G(x-y).
\eeq
We have $F(0)+G(0)=0$ so that we can assume that $F(0)=G(0)=0$ replacing $F$ and $G$ by $F-F(0)$ and $G-G(0)$.
Taking successively $y=0$, $x=0$, $y=x$ and $y=-x$ in \eqref{FG}, we get that
\beq\label{fgFG}
\left\{\ba{rr}
f(x)\approx F(x)+G(x), & g(x)\approx F(x)+G(-x),
\\ \ecart
F(2x)\approx f(x)+g(x),  & G(2x)\approx f(x)+g(-x),
\ea\right.
\eeq
which implies that
\beq
f(x)\approx f(x/2)+g(x/2)+g(-x/2)\quad\mbox{and}\quad g(x)\approx f(x/2)+f(-x/2)+g(x/2).
\eeq
Hence, the even nonnegative function $h$ defined by $h(x):=f(x)+f(-x)+g(x)+g(-x)$ satisfies
\beq\label{h}
h(x)\approx h(x/2)\quad\mbox{and}\quad\forall\,k\in\NN,\;\;h(x)=x^k\,\ep_k(x)\;\;\mbox{with }\lim_{x\to 0}\ep_k(x)=0.
\eeq
Reiterating the first condition of \eqref{h} there exists $c>1$ such that
\beq
\forall\,n,k\in\NN,\quad h(x)\leq c^n\,h(2^{-n}x)=c^n\,2^{-kn}\,x^k\,\ep_k(2^{-n}x),\quad\mbox{for $x>0$ close to $0$}.
\eeq
Then, choosing $k\in\NN$ such that $2^{-k}c\leq 1$, it follows that
\beq
0\leq h(x)\leq\lim_{n\to\infty}(2^{-k}c)^n\,x^k\,\ep_k(2^{-n}x)=0,
\eeq
which yields $h(x)=0$ and contradicts the assumption \eqref{fg} on $f,g$. Therefore, the isotropic realizability cannot be satisfied in this case.
\par\ms\noindent
{\it Second case: }$\exists\,m\in\NN,\ \exists\,a\neq 0$, $f(x)=a\,x^m+o(x^m)\ $ and $\ \forall\,k\in\NN$, $g(x)=o(x^k)$.
\par\noindent
By \eqref{fg} $m$ is an even positive integer and $a>0$. This combined with \eqref{fgFG} yields
\beq
\left\{\ba{ll}
F(x)+G(-x)\approx g(x),
\\ \ecart
F(x)\approx a\,(x/2)^m+g(x/2)\approx x^m, & G(-x)\approx a\,(-x/2)^m+g(x/2)\approx x^m.
\ea\right.
\eeq
Hence, we obtain that
\beq
x^m\approx F(x)+G(-x)\approx g(x),
\eeq
which leads us to a contradiction. Therefore, the isotropic realizability cannot hold in this case.
\par\ms\noindent
{\it Third case: }$\exists\,m,n\in\NN,\ \exists\,a,b\neq 0$, $f(x)=a\,x^m+o(x^m)$ and $g(x)=b\,x^n+o(x^n)$.
\par\noindent
By \eqref{fg} $m,n$ are even positive integers and $a,b>0$. Again by \eqref{fgFG} we have
\beq
F(x)+G(x)\approx x^m,\quad F(x)+G(-x)\approx x^n,\quad F(x)\approx x^m+x^n,\quad G(\pm\,x)\approx x^m+x^n.
\eeq
Hence, we get that
\beq
x^m\approx x^m+x^n\approx x^n,
\eeq
which implies that $m=n$.
\par
On the other hand, the continuity of $\mu$ \eqref{muFG} at $(0,0)$ implies that
\beq\label{limmu}
\lim_{(x,y)\to(0,0)}{F(x+y)+G(x-y)\over f(x)+g(y)}=\ell:=\mu(0,0)>0.
\eeq
Taking successively $y=x$, $y=-x$, $y=0$ and $x=0$ in \eqref{limmu}, we get that
\beq
\left\{\ba{rl}
\dis F(2x)\,\mathop{\sim}_{0}\,\ell\left(a+b\right)x^n, & \dis G(2x)\,\mathop{\sim}_{0}\,\ell\left(a+b\right)x^n,
\\ \ecart
\dis F(x)+G(x)\,\mathop{\sim}_{0}\,\ell\,a\,x^n, & \dis F(x)+G(-x)\,\mathop{\sim}_{0}\,\ell\,b\,x^n,
\ea\right.
\eeq
which implies that $2(a+b)=2^n a=2^n b$.
Therefore, we deduce that $a=b$ and $n=2$.
\par\ms
Conversely, if the asymptotic expansions \eqref{expfg0} hold, then the function $\mu$ defined by
\beq
\mu(x,y):=\left\{\ba{cl}
\dis {x^2+y^2\over f(x)+g(y)} & \mbox{if }(x,y)\in[-1,1]^2\setminus\{(0,0)\}
\\ \ecart
\dis {1\over a} & \mbox{if }(x,y)=(0,0),
\ea\right.
\eeq
is positive and continuous in $[-1,1]^2$. Moreover, by \eqref{e(U)fg} we have
\beq
\Div\,\big(\mu\,e(U)\big)=\Div\begin{pmatrix} 0 & x^2+y^2 \\ x^2+y^2 & 0 \end{pmatrix}=\nabla\left(2\,xy\right).
\eeq
Therefore, the strain field $e(U)$ is realizable with the continuous positive function $\mu$ in a neighborhood of $(0,0)$, which concludes the proof of Proposition~\ref{pro.sepvar}.
\cqfd
\subsection{The case of a laminate field}\label{ss.lam}
Theorem~\ref{thm.isorea} does not hold under the sole condition \eqref{e(U)­0} for non-regular fields.
An example of this situation is given by the laminates.
\begin{Def}
A two-phase rank-one laminate field is any measurable function $E:\RR^2\to\RR^{2\times 2}$ defined by
\beq\label{E}
E(X):=\chi(\xi\cdot X)\,E_1+\big(1-\chi(\xi\cdot X)\big)\,E_2\quad\mbox{for a.e. }X=(x,y)\in\RR^2,
\eeq
where $E_1,E_2$ are two given matrices in $\RR^{2\times 2}$, $\xi$ is a given unit norm vector in $\RR^2$, and $\chi$ is a characteristic function in $L^\infty(\RR)$.
\end{Def}
We have the following characterization of rank-one laminates:
\begin{Pro}
A two-phase rank-one laminate field $E$ of type \eqref{E} is a strain field $e(U)$ for some divergence free Lipschitz function $U:\RR^2\to\RR^2$, if and only if
\beq\label{strfieldlam}
E_1,E_2\in\RR^{2\times 2}_{s,0}\quad\mbox{and}\quad\exists\,\la\in\RR,\;\;E_1-E_2=\la\,\xi\odot R_{\perp}\xi.
\eeq
\end{Pro}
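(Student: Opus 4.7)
The plan is to prove the two implications separately: necessity will follow from the Hadamard jump condition together with the symmetry and zero-trace constraints, and sufficiency from an explicit construction after rotating coordinates so that $\xi=e_1$.

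\emph{Necessity.} Suppose $E=e(U)$ for a divergence free Lipschitz field $U$. Since $e(U)$ is symmetric a.e.\ and $\div\,U=\tr(e(U))=0$ a.e., both values $E_1,E_2$ must lie in $\RR^{2\times 2}_{s,0}$. Now $U$ is continuous on $\RR^2$ whereas $DU$ is piecewise constant with potential jumps across the parallel lines $\{\xi\cdot X=c\}$; consequently any derivative of $U$ taken in the tangent direction $R_\perp\xi$ is continuous across each interface. The classical Hadamard relation then forces $[DU]=\xi\otimes a$ for some $a\in\RR^2$, and symmetrizing gives $E_1-E_2=\xi\odot a$. Finally the zero-trace condition $\tr(\xi\odot a)=\xi\cdot a=0$ forces $a=\la\,R_\perp\xi$ for some $\la\in\RR$, so that $E_1-E_2=\la\,\xi\odot R_\perp\xi$, which is precisely \eqref{strfieldlam}.

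\emph{Sufficiency.} Conversely assume $E_1,E_2\in\RR^{2\times 2}_{s,0}$ and $E_1-E_2=\la\,\xi\odot R_\perp\xi$. By an orthonormal change of coordinates mapping $e_1$ to $\xi$ and $e_2$ to $R_\perp\xi$, it suffices to produce $U$ when $\xi=e_1$. In that case each $E_i$ has the form $\begin{pmatrix}\al & \be_i \\ \be_i & -\al\end{pmatrix}$, the diagonal entry $\al$ being common to both phases (the difference being a scalar multiple of $e_1\odot e_2$) and $\be_1-\be_2=\la/2$. Setting $\be(x):=\chi(x)\be_1+(1-\chi(x))\be_2\in L^\infty(\RR)$ and letting $w$ be any Lipschitz primitive of $2\be$, the field
\[
U(x,y):=\big(\al x,\,-\al y+w(x)\big)
\]
is divergence free, Lipschitz, and a direct computation from the definition \eqref{e(U)} yields $e(U)(x,y)=E(x,y)$ a.e. Rotating back yields the desired field in the original coordinates.

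\emph{Main point.} No substantial analytic input is required: the whole content of the proposition resides in the Hadamard jump condition (which is standard for a Lipschitz field whose gradient is piecewise constant across parallel hyperplanes, and is obtained by differentiating the identity $U^+=U^-$ along the interface in the direction $R_\perp\xi$) and in the algebraic translation of the symmetry and zero-trace constraints into the rank-one form $\la\,\xi\odot R_\perp\xi$. Once these are in hand the sufficiency reduces to the explicit one-parameter construction above.
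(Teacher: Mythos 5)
Your sufficiency argument is fine and is essentially the paper's: the paper writes the coordinate-free field $U(X)=E_2X+\la\bigl(\int_0^{\xi\cdot X}\chi(t)\,dt\bigr)R_\perp\xi$, while you rotate to $\xi=e_1$ and integrate the off-diagonal entry; these are the same construction. The issue is in the necessity direction. You apply the Hadamard jump condition to $DU$, asserting that ``$DU$ is piecewise constant with potential jumps across the parallel lines $\{\xi\cdot X=c\}$.'' But the hypothesis only says that the \emph{symmetric part} $e(U)$ is a two-valued laminate; nothing is assumed about the antisymmetric part of $DU$, so the claim that $DU$ itself is a laminate in the direction $\xi$ is exactly the non-obvious content here and cannot be taken for granted. (It is in fact true for a Lipschitz $U$: the gradient of the rotation $\om={1\over 2}(\partial_xU_y-\partial_yU_x)$ is expressible in first derivatives of $e(U)$, hence depends only on $\xi\cdot X$ up to an affine term that boundedness of $DU$ kills --- but this argument, or an equivalent one, must be supplied.) A second, smaller point: $\chi$ is an arbitrary characteristic function in $L^\infty(\RR)$, so there need not be smooth ``interfaces'' along which one can differentiate the identity $U^+=U^-$; the jump relation has to be extracted distributionally.

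The paper avoids both difficulties by working directly with $e(U)$: it invokes the Saint-Venant compatibility condition
$\partial^2_{xx}\big[e(U)_{22}\big]+\partial^2_{yy}\big[e(U)_{11}\big]=2\,\partial^2_{xy}\big[e(U)_{12}\big]$,
which holds in $\D'(\RR^2)$ for any Lipschitz $U$ with no structural assumption on $DU$. Plugging in the laminate form \eqref{E} turns this into a single linear relation on the entries of $E_1-E_2$, which together with symmetry and zero trace forces $E_1-E_2=\la\,\xi\odot R_\perp\xi$. Your route is repairable (either by justifying that $DU$ is a laminate as sketched above, or by replacing the Hadamard step with the compatibility condition), but as written the necessity proof has a gap precisely at the step that distinguishes strain fields from gradient fields.
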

\begin{proof}
Assume that the field $E$ of \eqref{E} agrees with $e(U)$ for some divergence free Lipschitz function $U:\RR^2\to\RR^2$. Since the strain field $e(U)$ is a zero trace symmetric matrix-valued function, the phases $E_1,E_2$ belong to $\RR^{2\times 2}_{s,0}$ (symmetric with zero trace).
Moreover, the strain field $e(U)$ satisfies the differential constraint
\beq
\partial^2_{xx}\big[e(U)_{22}\big]+\partial^2_{yy}\big[e(U)_{11}\big]=2\,\partial^2_{xy}\big[e(U)_{12}\big]\quad\mbox{in }\RR^2,
\eeq
which by \eqref{E} implies that
\beq
\left(\xi_x^2-\xi_y^2\right)(E_1-E_2)_{22}=-\,2\,\xi_x\xi_y\,(E_1-E_2)_{12}.
\eeq
Since $\xi_x^2-\xi_y^2$ and $\xi_x\xi_y$ are not simultaneously zero ($|\xi|=1$) and $E_1-E_2\in\,\RR^{2\times 2}_{s,0}$, we deduce from the previous equality the existence of $\la\in\RR$ such that
\beq
E_1-E_2=\la\begin{pmatrix} -2\,\xi_x\xi_y & \xi_x^2-\xi_y^2 \\ \xi_x^2-\xi_y^2 & 2\,\xi_x\xi_y \end{pmatrix}
=\la\,\xi\odot R_{\perp}\xi.
\eeq
\par
Conversely, assume that \eqref{strfieldlam} holds. Consider the Lipschitz function $U$ defined by
\beq
U(X):=E_2\,X+\la\left(\int_0^{\xi\cdot X}\chi(t)\,dt\right)R_{\perp}\xi\quad\mbox{for }X\in\RR^2.
\eeq
We have
\beq
DU(X)=E_2+\la\,\chi(\xi\cdot X)\,\xi\otimes R_\perp\xi,\quad\mbox{for a.e. }X\in\RR^2,
\eeq
hence by \eqref{strfieldlam} and \eqref{E}
\beq
e(U)(X)=E_2+\chi(\xi\cdot X)\left(E_1-E_2\right)=E(X)\quad\mbox{for a.e. }X\in\RR^2,
\eeq
which concludes the proof.
\end{proof}
Now, define the isotropic realizability for laminates.
\begin{Def}\label{def.realam}
A two-phase rank-one laminate field $E$ of type \eqref{E} is isotropically realizable for the Stokes equation in $\RR^2$ if there exist a two-phase rank-one positive function $\mu$ defined by
\beq\label{mulam}
\mu(X):=\chi(\xi\cdot X)\,\mu_1+\big(1-\chi(\xi\cdot X)\big)\,\mu_2\quad\mbox{for a.e. }X\in\RR^2,\quad\mbox{with }\mu_1,\mu_2>0,
\eeq
and a function $p\in L^2_{\rm loc}(\RR^2)$ such that in the distributions sense,
\beq\label{StokesE}
-\,\Div\left(\mu\,E\right)+\nabla p=0\quad\mbox{in }\RR^2.
\eeq
\end{Def}
Then, we have the following realizability result for two-phase rank-one laminates.
\begin{Thm}\label{thm.realam}
A strain field $E$ of type \eqref{E} is isotropically realizable in the sense of Definition~\ref{def.realam}, if and only if
\beq\label{E1E2}
E_1:E_2>{|E_1|^2|E_2|^2+(E_1:E_2)^2\over |E_1|^2+|E_2|^2}\quad\mbox{or}\quad E_1=E_2.
\eeq
\end{Thm}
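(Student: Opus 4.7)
The strategy is to convert the Stokes equation \eqref{StokesE} into an algebraic constraint on $(\mu_1,\mu_2)$, then rewrite that constraint intrinsically in terms of $E_1,E_2$. The key observation is that, under the laminate ansatz, $\mu E$ and the pressure $p$ depend on $X$ only through $s:=\xi\cdot X$.

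\emph{First step: distributional reduction.} Writing $\mu E=\mu_2 E_2+\chi(\xi\cdot X)(\mu_1 E_1-\mu_2 E_2)$, a short distributional computation based on the paper's divergence convention gives $\Div(\mu E)(X)=\chi'(\xi\cdot X)\,\vec v$ with $\vec v:=(\mu_1 E_1-\mu_2 E_2)\,\xi$, so that \eqref{StokesE} becomes $\nabla p=\chi'(\xi\cdot X)\,\vec v$ in $\D'(\RR^2)$. When $\chi$ is a.e. constant, $E$ is itself a.e. constant and any $\mu_1=\mu_2>0$ with $p=0$ realizes $E$ (consistent with the trivial case $E_1=E_2$). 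Otherwise $\chi$ is not affine, hence $\chi''\neq 0$ in $\D'(\RR)$ and $\chi''(\xi\cdot X)\neq 0$ in $\D'(\RR^2)$. Taking the curl of $\nabla p=\chi'(\xi\cdot X)\vec v$ and using $\curl(\nabla p)=0$, one finds $(R_\perp\xi\cdot\vec v)\,\chi''(\xi\cdot X)=0$, which forces $R_\perp\xi\cdot\vec v=0$, i.e., $\vec v\parallel\xi$. Conversely, if $\vec v=c\,\xi$, then $p(X):=c\,\chi(\xi\cdot X)\in L^\infty_{\mathrm{loc}}(\RR^2)\subset L^2_{\mathrm{loc}}(\RR^2)$ solves \eqref{StokesE}.

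\emph{Second step: sign condition.} Introduce the scalars $\alpha_i:=E_i\xi\cdot R_\perp\xi=E_i:(\xi\odot R_\perp\xi)$ for $i=1,2$. The parallelism condition then reads $\mu_1\alpha_1=\mu_2\alpha_2$, and positive $\mu_1,\mu_2$ exist if and only if either $E_1=E_2$, or $\alpha_1\alpha_2>0$. Now invoke the structural identity $E_1-E_2=\lambda\,\xi\odot R_\perp\xi$ from the proposition preceding Definition~\ref{def.realam}, with $\lambda\neq 0$ whenever $E_1\neq E_2$. Taking the scalar product of this identity with $E_i$ yields
\[
\lambda\,\alpha_1=|E_1|^2-E_1\!:\!E_2,\qquad \lambda\,\alpha_2=E_1\!:\!E_2-|E_2|^2,
\]
so that, multiplying and using $\lambda^2>0$, the sign condition $\alpha_1\alpha_2>0$ is equivalent to $(|E_1|^2-E_1\!:\!E_2)(E_1\!:\!E_2-|E_2|^2)>0$.

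\emph{Third step: intrinsic rewriting.} Expanding the latter inequality yields
\[
(E_1\!:\!E_2)(|E_1|^2+|E_2|^2)>|E_1|^2|E_2|^2+(E_1\!:\!E_2)^2,
\]
which, upon dividing by $|E_1|^2+|E_2|^2>0$, is precisely \eqref{E1E2}. Together with the trivial case $E_1=E_2$, this gives both necessity and sufficiency. The only nontrivial point in the whole argument is the distributional curl step: one must rule out $\chi''\equiv 0$ for a nontrivial characteristic function $\chi$, which reduces to the elementary observation that an affine function valued in $\{0,1\}$ a.e. must be constant. All remaining manipulations are routine algebra.
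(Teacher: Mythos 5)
Your proposal is correct and follows essentially the same route as the paper: reduce the Stokes system to the parallelism condition $(\mu_1E_1-\mu_2E_2)\,\xi\parallel\xi$, translate it into the sign condition $\left(E_1R_\perp\xi\cdot\xi\right)\left(E_2R_\perp\xi\cdot\xi\right)>0$ (or $E_1=E_2$), and use the rank-one jump relation $E_1-E_2=\la\,\xi\odot R_\perp\xi$ to rewrite this intrinsically as \eqref{E1E2}. The one step you assert slightly ahead of its justification is the degenerate subcase $\al_1=\al_2=0$ forcing $E_1=E_2$; this is indeed closed either by the relations $\la\al_1=|E_1|^2-E_1\!:\!E_2$, $\la\al_2=E_1\!:\!E_2-|E_2|^2$ that you write down immediately afterwards (which give $|E_1-E_2|^2=0$ when $\la\neq 0$ is assumed), or, as the paper does explicitly, by the computation $\al_1-\al_2=\la/2$.
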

\begin{Rem}
When $E_1\neq E_2$, the realizability condition \eqref{E1E2} is stronger than the condition \eqref{e(U)­0}, {\em i.e.} $E\neq 0$, of the regular case. It depends only on the values of the strain field in the phases and not on the lamination direction.
\end{Rem}
\noindent
{\bf Proof of Theorem~\ref{thm.realam}.}
Consider a strain field $E$ of type \eqref{E} and a positive function $\mu$ of type~\eqref{mulam}. We have in the distributions sense 
\beq
\Div\left(\mu\,E\right)=\chi'(\xi\cdot X)\,(\mu_1\,E_1-\mu_2\,E_2)\,\xi\quad\mbox{in }\RR^2.
\eeq
Hence, if equation \eqref{StokesE} holds with a function $p\in L^2_{\rm loc}(\RR^2)$, then $\nabla p=\chi'(\xi\cdot X)\,q$ in $\RR^2$ for some fixed vector $q\in\RR^2$, which implies that $q\parallel \xi$. Thus, $p$ is also a two-phase rank-one laminate function, {\em i.e.}
\beq\label{plam}
p(x)=\chi(\xi\cdot X)\,p_1+\big(1-\chi(\xi\cdot X)\big)\,p_2\quad\mbox{for a.e. }X\in\RR^2.
\eeq
It follows that $E$ solves the Stokes equation \eqref{StokesE} with $\mu$ and $p$ if and only if
\beq
(\mu_1\,E_1-\mu_2\,E_2)\,\xi=(p_1-p_2)\,\xi.
\eeq
Therefore, $E$ is isotropically realizable in the sense of Definition~\ref{thm.realam} if and only if there exist two constants $\mu_1,\mu_2>0$ such that $(\mu_1\,E_1-\mu_2\,E_2)\,\xi\parallel\xi$, or equivalently
\beq\label{E1E2a}
\exists\,\mu_1,\mu_2>0,\quad\mu_1\,E_1\,R_\perp\xi\cdot\xi=\mu_2\,E_2R_\perp\xi\cdot\xi.
\eeq
\par
Next, let us check that condition \eqref{E1E2a} is equivalent to the following one
\beq\label{E1E2b}
\left(E_1R_{\perp}\xi\cdot\xi\right)\left(E_2R_{\perp}\xi\cdot\xi\right)>0\quad\mbox{or}\quad E_1=E_2.
\eeq
It is clear that condition \eqref{E1E2b} implies \eqref{E1E2a}. 
Conversely, if \eqref{E1E2a} holds then
\beq\label{E1E2c}
\left(E_1R_{\perp}\xi\cdot\xi\right)\left(E_2R_{\perp}\xi\cdot\xi\right)\geq 0.
\eeq
To obtain \eqref{E1E2b} it is enough to deal with the case of equality in \eqref{E1E2c}.
This combined with \eqref{E1E2a} implies that $E_1R_{\perp}\xi\cdot\xi=E_2R_{\perp}\xi\cdot\xi=0$.
However, by the jump condition of \eqref{strfieldlam} we have
\beq
E_1R_{\perp}\xi\cdot\xi-E_2R_{\perp}\xi\cdot\xi
={\la\over 2}\,\big[(\xi\otimes R_\perp\xi)R_\perp\xi\cdot\xi+(R_\perp\xi\otimes\xi)R_\perp\xi\cdot\xi\big]
={\la\over 2}\,|R_\perp\xi|^2\,|\xi|^2={\la\over 2},
\eeq
which yields $\la=0$. Therefore, again by \eqref{strfieldlam} we get the desired equality $E_1=E_2$.
\par
Finally, noting that $E_iR_{\perp}\xi\cdot\xi=E_i:\left(\xi\odot R_{\perp}\xi\right)$ for $i=1,2$, and using equality \eqref{strfieldlam} we obtain that
\beq
\ba{ll}
\la^2\left(E_1R_{\perp}\xi\cdot\xi\right)\left(E_2R_{\perp}\xi\cdot\xi\right)
& \dis =\big(E_1:(E_1-E_2)\big)\,\big(E_2:(E_1-E_2)\big)
\\ \ecart
& \dis =\left(|E_1|^2+|E_2|^2\right)E_1:E_2-|E_1|^2|E_1|^2-(E_1:E_2)^2,
\ea
\eeq
which implies the equivalence between \eqref{E1E2} and \eqref{E1E2b}.
The proof of Theorem~\ref{thm.realam} is now complete.
\cqfd
\par\bigskip\noindent
{\bf Acknowledgment:} The author is very grateful to P. G\'erard for stimulating discussions and in particular for Lemma~\ref{lem.G}.

\end{document}